\providecommand{\otherindexspace}[1]{}
\newtheorem{theorem}{Theorem}[section]
\newtheorem{lemma}[theorem]{Lemma}
\newtheorem{proposition}[theorem]{Proposition}
\newtheorem{remark}[theorem]{Remark}
\newtheorem{definition}[theorem]{Definition}
\newtheorem{assumption}[theorem]{Assumption}
\numberwithin{equation}{section}
\def\cal#1{\mathcal{#1}}
\def \H{\mathbb {H}}
\def \N{\mathbb {N}}
\def \R{\mathbb {R}}
\def \E{\mathbb {E}}
\def \F{\mathbb {F}}
\def \P{\mathbb {P}}
\def \mS{\mathcal{S}}
\def \mA{\mathcal{A}}
\def \mF{\mathcal{F}}
\def \mP{\mathcal{P}}
\def \mG{\mathcal{G}}
\newcommand{\ov}{\overline}
\def\titre{\@title}
\begin{document}
\vspace{5cm}
\title{Reflected scheme for doubly reflected BSDEs with jumps and RCLL obstacles}
\author{Roxana DUMITRESCU\thanks{CEREMADE,
Universit\'e Paris 9 Dauphine, CREST  and  INRIA Paris-Rocquencourt, email: {\tt roxana@ceremade.dauphine.fr}. The research leading to these results has received funding from the R\'egion Ile-de-France. }
\and
C\'eline LABART\thanks{LAMA, 
Universit\'e de Savoie, 73376 Le Bourget du Lac, France and  INRIA Paris-Rocquencourt,
email: {\tt celine.labart@univ-savoie.fr}, corresponding author}}

\maketitle

\begin{abstract} We introduce a discrete time reflected scheme to solve
  doubly reflected Backward Stochastic Differential Equations with jumps (in short DRBSDEs), driven by a Brownian motion and an independent
  compensated Poisson process. As in \cite{DL14}, we approximate the Brownian motion and the Poisson process by two random
walks, but contrary to this paper, we discretize directly the DRBSDE,
without using a penalization step. This gives us a fully implementable scheme,
which only
depends on one parameter of approximation: the number of time steps $n$ (contrary
to the scheme proposed in \cite{DL14}, which also depends on the penalization parameter). We prove the convergence of the scheme, and give some numerical examples.

\end{abstract}

\vspace{10mm}

\noindent{\bf Key words~:} Double barrier reflected BSDEs, Backward
stochastic differential equations with jumps, numerical
scheme.

\vspace{10mm}

\noindent{\bf MSC 2010 classifications~:} 60H10, 60H35, 60J75, 34K28.



%
%
%
%
%
%
%
%
%
%
%
%
%

\section{Introduction}

\quad 

\quad Non-linear backward stochastic differential equations (BSDEs in short)
have been introduced by Pardoux and Peng in the Brownian framework in their
seminal paper \cite{PP90} and then extended to the case of jumps by Tang and
Li \cite{TL94}.  BSDEs appear as a useful mathematical tool in finance
(hedging problems) and in stochastic control. Moreover, these stochastic
equations provide a probabilistic representation for the solution of
semilinear partial differential equations.  BSDEs have been extended to the
reflected case by El Karoui et al in \cite{EKPPQ97}. In their setting, one of
the components of the solution is forced to stay above a given barrier which
is a continuous adapted stochastic process. The main motivation is the pricing
of American options especially in constrained markets. The generalization to the case of two
reflecting barriers has been carried out by Cvitanic and Karatzas in
\cite{CK96}. It is well known that doubly reflected BSDEs (DRBSDEs in the
following) are related to Dynkin games and to the pricing of
Israeli options (or Game options). The extension to the
case of reflected BSDEs with jumps and one reflecting barrier with only inaccessible
jumps has been established by Hamad\`ene and Ouknine \cite{HO03}. Later on,
Essaky in \cite{Ess08} and Hamad\`ene and Ouknine in \cite{HO13} have extended
these results to a right-continuous left limited (RCLL) obstacle with
predictable and inaccessible jumps.  Results concerning existence and
uniqueness of the solution for doubly reflected BSDEs with jumps can be found
in \cite{CM08},\cite{DQS14},
\cite{HH06}, \cite{HW09} and \cite{EHO05}.\\

\quad  Numerical schemes for DRBSDEs driven by the Brownian motion have been
proposed by Xu in \cite{Xu11} (see also \cite{MPX02} and \cite{PX11}) and, in the Markovian framework, by Chassagneux in
\cite{C09}. In this paper, we are interested in numerically solving DRBSDEs
driven by a Brownian motion and an independent Poisson process in the case of
RCLL obstacles with only totally inacessible jumps. More precisely, we consider equations of the following form:

\begin{align}\label{eqintro}
  \left \lbrace \begin{tabular}{l}
    \mbox{(i) $Y_t=\xi_T+\int_t^T g(s,Y_s,Z_s,U_s)ds+(A_T-A_t)-(K_T-K_t)-\int_t^T Z_s
      dW_s-\int_t^T U_s d\tilde{N}_s$},\\
    \mbox{(ii) $\forall t \in [0,T]$, $\xi_t \le Y_t \le \zeta_t$ a.s.,}\\
    \mbox{(iii) $\int_0^T (Y_{t} -\xi_{t}) dA_t=0$ a.s.  and $\int_0^T (\zeta_{t} - Y_{t})
      dK_t=0$ a.s.}
  \end{tabular} \right.
\end{align}  $\{W_t: 0 \leq t \leq T \}$ is a one dimensional
standard Brownian motion and $\{\Tilde{N}_t:=N_t-\lambda t, 0 \le t \le T\}$ is a compensated
Poisson process. Both processes are independent and they
are defined on the probability space $(\Omega,
\mathcal{F}_T,\mathbb{F}=\{\mathcal{F}_t\}_{0\le t \le T},\mathbb{P})$. The
processes $A$ and $K$ have the role to keep the solution between the two
obstacles $\xi$ and $\zeta$. Since we consider that the jumps of the obstacles
are totally inaccessible, $A$ and $K$ are continuous processes.\\

In the non-reflected case, some numerical methods have been provided: in
\cite{BE08}, the authors propose a scheme for Forward-Backward SDEs based on
the dynamic programming equation and in \cite{LMT07}, the authors propose a
fully implementable scheme based on a random binomial tree.  In the reflected
case, a fully implementable numerical scheme has been recently provided by
Dumitrescu and Labart in \cite{DL14}. Their method is based on the
approximation of the Brownian motion and the Poisson process by two random
walks and on the approximation of the reflected BSDE by a sequence of
penalized BSDEs.\\

The aim of this paper is to propose an alternative scheme to \cite{DL14} to
solve \eqref{eqintro}. The scheme proposed here takes the following form:
\begin{equation}\label{eqintro1}
\begin{cases}
\overline{y}_j^n=\E[\overline{y}_{j+1}^n|\mathcal{F}_j^n]+g(t_j,\E[\overline{y}_{j+1}^n|\mathcal{F}_j^n], \overline{z}_j^n, \overline{u}_j^n)\delta+\overline{a}_j^n-\overline{k}_j^n,\\
\overline{a}_j^n \geq 0,\; \overline{k}_j^n \geq 0,\; \overline{a}_j^n \overline{k}_j^n=0,\\
\xi_j^n \leq \overline{y}_j^n \leq \zeta_j^n,\; (\overline{y}_j^n-\xi_j^n)\overline{a}_j^n=(\overline{y}_j^n-\zeta_j^n)\overline{k}_j^n=0.\\
\end{cases}
\end{equation}

It generalizes the scheme proposed by \cite{Xu11} to the case of jumps.
Compared to the scheme proposed in \cite{DL14}, the scheme proposed here \textemdash called
\textit{reflected scheme} in the following \textemdash is based on the direct
discretization of \eqref{eqintro}. In particular, there is no penalization step. Then, this method only
depends on one parameter of approximation (the number of time steps $n$), contrary
to the scheme proposed in \cite{DL14} (which also depends on the penalization parameter). We
provide here an \textit{explicit reflected scheme} and an \textit{implicit
  reflected scheme} and we show the convergence of both schemes. We illustrate
numerically  the theoretical results and show  they coincide with the ones obtained by using
the penalized scheme presented in \cite{DL14}, for large values of the penalization parameter.

\quad The paper is organized as follows: in Section 2 we introduce notations
and assumptions. In Section 3, we precise the discrete time framework and
present the numerical schemes. In Section 4 we provide the convergence of the
schemes. Numerical examples are given in Section 5 .

\section{Notations and assumptions}

In this Section we introduce notations and assumptions. We recall the
result on existence and uniqueness of solution to \eqref{eqintro}. We also
introduce some assumptions on the obstacles $\xi$ and $\zeta$ specific to this
paper (Assumption \ref{hypo2}).\\

Let $(\Omega,  \F, \P)$ be a probability space, and ${\mP}$ be  the predictable $\sigma$-algebra
on $[0,T]  \times \Omega$.
Let  $W$ be a one-dimensional Brownian motion and  $N$ be a Poisson process with
intensity $\lambda >0$. Let  $\F = \{\mathcal{F}_t , 0\leq t \leq T \}$ 
be  the natural filtration associated with $W$ and $N$. \\

For each $T>0$, we use the following notations:
\begin{itemize}
\item
$L^2(\mathcal{F}_T)$  is the set of $ \mathcal{F}_T
$-measurable and square integrable random variables.

\item    $\H^{2}$ is the set of
real-valued predictable processes $\phi$ such that $\| \phi\|^2_{\H^{2}} := \E \left[\int_0 ^T \phi_t ^2 dt \right] < \infty.$

\item $\mathcal {B}(\R^2)$ is the Borelian
$\sigma$-algebra on $\R^2$.

\item   ${\cal S}^{2}$ is the set of real-valued RCLL adapted
processes $\phi$ such that $\| \phi\|^2_{\mathcal {S}^2} := \E(\sup_{0\leq t \leq T} |\phi_t |^2) <  \infty.$

\item $\mathcal {A}^2$ is the set of real-valued non decreasing RCLL predictable
processes $A$ with $A_0 = 0$ and $\E(A^2_T) < \infty$. 
\end {itemize}

\begin{definition}[Driver, Lipschitz driver]\label{defd}
A function $g$ is said to be a {\em driver} if
\begin{itemize}
\item
$g:  \Omega \times [0,T] \times \R^3 \rightarrow \R $\\
$(\omega, t,y, z, u) \mapsto  g(\omega, t,y,z,u) $
 is $ {\cal P} \otimes {\cal B}(\R^3)$-measurable,
\item $\|g(.,0,0,0)\|_{\infty}< \infty$.
\end{itemize}
A driver $g$ is called a {\em Lipschitz driver} if moreover there exists a
constant $ C_g \geq 0$ and a bounded, non-decreasing continuous function
$\Lambda$ with $\Lambda(0)=0$ such that $d \P \otimes dt$-a.s.\,,
for each $(s_1,y_1, z_1, u_1)$, $(s_2,y_2, z_2, u_2)$,
$$|g(\omega, s_1, y_1, z_1, u_1) - g(\omega, s_2, y_2, z_2, u_2)| \leq \Lambda(|s_2-s_1|)+C_g (|y_1 - y_2| + |z_1 - z_2| + |u_1 - u_2 |).$$
\end{definition}

\begin{definition}[Mokobodzki's condition] Let $\xi$, $\zeta$ be in
  $\mS^2$. There exist two nonnegative RCLL supermartingales $H$ and $H'$ in
  $\mS^2$ such that
  \begin{align*}
\forall t \in [0,T],\;\; \xi_t  \le H_t-H'_t \le \zeta_t
\mbox{ a.s.}
  \end{align*}
\end{definition}

The following Theorem states existence and uniqueness of solutions to \eqref{eqintro} (see for e.g. \cite[Proposition 5.1]{CM08}).
\begin{theorem}\label{thm1}
  Suppose $\xi$ and $\zeta$ are RCLL adapted processes in $\mS^2$ such that
  for all $t\in [0,T]$, $\xi_t\le \zeta_t$, Mokobodzki's condition
  holds and $g$ is a Lipschitz driver. Then, DRBSDE \eqref{eqintro} admits a unique solution $(Y,Z,U,\alpha)$ in
  $\mS^2\times \H^2 \times \H^2 \times \mS^2$, where $\alpha:=A-K$, $A$ and
  $K$ in $\mA^2$.
\end{theorem}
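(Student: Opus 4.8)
The plan is to prove Theorem~\ref{thm1} by reducing the doubly reflected BSDE with jumps to a setting where classical fixed-point machinery applies, and to use Mokobodzki's condition to remove the reflection constraints. I would proceed in three main stages: first treat the case of a driver $g$ that does not depend on $(y,z,u)$ (i.e.\ $g=g(\omega,s)$), then use a Picard-type contraction to handle the general Lipschitz driver, and throughout rely on a priori estimates obtained from It\^o's formula and the Skorokhod-type minimality conditions~\eqref{eqintro}(iii).

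\medskip

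\noindent\textbf{Step 1: The driver-free case via a change of variables.}
Suppose first $g$ does not depend on $(Y,Z,U)$. Let $H,H'$ be the supermartingales from Mokobodzki's condition, with Doob--Meyer decompositions $H_t = H_0 + M_t - C_t$ and $H'_t = H'_0 + M'_t - C'_t$, where $M,M'$ are martingales and $C,C'$ are nondecreasing predictable RCLL processes. Introduce the shifted process $\bar Y_t := Y_t - (H_t - H'_t) + \int_0^t g(s)\,ds$; the obstacle inequality $\xi_t \le Y_t \le \zeta_t$ translates into the requirement that $\bar Y$ lies between two new obstacles $\bar\xi_t := \xi_t - (H_t-H'_t) + \int_0^t g(s)\,ds$ and $\bar\zeta_t := \zeta_t - (H_t-H'_t) + \int_0^t g(s)\,ds$, and critically $\bar\xi_t \le 0 \le \bar\zeta_t$ for all $t$. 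For such ``sandwiching $0$'' obstacles the existence of a solution to the doubly reflected equation (now with zero driver, up to the absorbed martingale parts $M,M'$ and finite-variation parts $C,C'$) can be obtained, e.g.\ by the penalization method or by a Snell-envelope / Dynkin-game representation, exploiting that the trivial process $0$ already satisfies the constraints so the reflecting processes $A,K$ are well controlled. One reads off $(Z,U)$ from the martingale representation theorem applied to $W$ and $\tilde N$, and one checks $Z,U \in \H^2$ and $Y \in \mS^2$, $A,K \in \mA^2$. Since the jumps of $\xi,\zeta$ are totally inaccessible while $A,K$ are predictable, one argues as in \cite{CM08} that $A$ and $K$ are in fact \emph{continuous}.

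\medskip

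\noindent\textbf{Step 2: Contraction for the general Lipschitz driver.}
For general $g$, define a map $\Phi$ on $\mS^2\times\H^2\times\H^2$ by: given $(y,z,u)$, let $(Y,Z,U,\alpha)=\Phi(y,z,u)$ be the solution (from Step~1) of the DRBSDE with frozen driver $g(\cdot,y_\cdot,z_\cdot,u_\cdot)$. One shows $\Phi$ is a contraction for the norm $\|(Y,Z,U)\|_\beta^2 := \E\int_0^T e^{\beta s}(|Y_s|^2+|Z_s|^2+|U_s|^2)\,ds$ with $\beta$ large enough. The key estimate is obtained by applying It\^o's formula to $e^{\beta s}|\delta Y_s|^2$ where $\delta Y=Y^1-Y^2$ (and similarly $\delta Z,\delta U,\delta\alpha$) for two inputs; the crucial sign observation is that the reflection term contributes
\[
\int_0^T e^{\beta s}\,\delta Y_s\,d(\delta\alpha_s) = \int_0^T e^{\beta s}(\delta Y_s)\,d(\delta A_s) - \int_0^T e^{\beta s}(\delta Y_s)\,d(\delta K_s) \le 0,
\]
which follows from the Skorokhod minimality conditions~\eqref{eqintro}(iii): on $\{dA^1>0\}$ we have $Y^1=\xi$ so $\delta Y = \xi - Y^2 \le 0$, and symmetrically for $K$. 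Combined with the Lipschitz bound on $g$ and Gronwall/Young inequalities, this yields $\|\Phi(y^1,z^1,u^1)-\Phi(y^2,z^2,u^2)\|_\beta \le \tfrac12 \|(y^1,z^1,u^1)-(y^2,z^2,u^2)\|_\beta$ for $\beta$ suitably large. The Banach fixed-point theorem then produces a solution; uniqueness follows from the same a priori estimate applied to two solutions directly.

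\medskip

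\noindent I expect the main obstacle to be Step~1: establishing existence in the driver-free case with general RCLL obstacles sandwiched around $0$, and in particular verifying the square-integrability of the reflecting processes $A$ and $K$ (equivalently of $\alpha=A-K$ in $\mS^2$) and their continuity. The continuity relies on a careful analysis of the jump times — decomposing any jump of $\alpha$ into predictable and totally inaccessible parts and using that $\xi,\zeta$ jump only at totally inaccessible times while $A,K$ are predictable, so the predictable part of the jump of $\alpha$ must vanish. The integrability bound on $\alpha_T$ comes from testing the equation against $Y$ itself via It\^o's formula and using the boundedness of $g(\cdot,0,0,0)$ together with Mokobodzki's supermartingales as comparison processes. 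Since this result is quoted from \cite[Proposition~5.1]{CM08}, in the write-up I would either cite it directly or reproduce only the estimates specific to the jump setting; the Brownian and Poisson martingale representation and the Lipschitz contraction are otherwise standard adaptations of El~Karoui et al.\ \cite{EKPPQ97} and Cvitanic--Karatzas \cite{CK96}.
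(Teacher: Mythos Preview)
The paper does not actually prove Theorem~\ref{thm1}: it is stated with the remark ``see for e.g.\ \cite[Proposition~5.1]{CM08}'' and no argument is given. So there is no proof in the paper to compare against; the authors simply cite Cr\'epey--Matoussi.

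Your sketch is a reasonable outline of the standard route to such results (reduce to a driver-free case via Mokobodzki's supermartingales, then run a Picard contraction in a weighted $\H^2$-norm using the sign property $\int \delta Y\, d(\delta\alpha)\le 0$ from the Skorokhod conditions), and it is essentially the strategy used in \cite{CM08} and its predecessors \cite{CK96}, \cite{EKPPQ97}. Since you already note that the result is quoted from \cite{CM08}, the appropriate choice here is exactly what the paper does: cite it and move on. If you do want to include more, the only places to be careful are the ones you flag yourself---the square-integrability of $A,K$ in Step~1 (this is where Mokobodzki's condition is genuinely used, not just for the change of variables) and the continuity of $A,K$, which in the present paper is not derived from the abstract theorem but is ensured by the additional structural Assumption~\ref{hypo2} on the obstacles (totally inaccessible jumps only).
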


Let us now introduce an additional assumption on $g$, which ensures the
comparison theorem for BSDEs with jumps (see \cite[Theorem 4.2]{QS13}). The
comparison theorem plays a key role in the proof of the convergence of the
penalized scheme (see \cite{DL14}), which is useful to prove the convergence
of the reflected scheme (see Section \ref{sect:conv_res}).
  
\begin{assumption}\label{hypo1} A Lipschitz driver $g$ is said to satisfy
  Assumption \ref{hypo1} if the following holds : $d\P \otimes dt $ a.s. for
  each $(y,z,u_1,u_2) \in \R^4$, we have
  \begin{align*}
    g(t,y,z,u_1)-g(t,y,z,u_2) \ge \theta(u_1-u_2), \mbox{ with }-1 \le \theta \le \theta_0.
  \end{align*}
\end{assumption}

We also assume the following hypothesis on the barriers. 

\begin{assumption}\label{hypo2} $\xi$ and $\zeta$ are Itô processes of the
  following form
\begin{equation}\label{barrierxi}
\xi_t=\xi_0+\int_0^tb^{\xi}_sds+\int_0^t\sigma^{\xi}_{s}dW_s+\int_0^t\beta^{\xi}_{s^-}d\Tilde{N}_s
\end{equation}
\begin{equation}\label{barrierzeta}
\zeta_t=\zeta_0+\int_0^t
b^{\zeta}_sds+\int_0^t\sigma^{\zeta}_{s}dW_s+\int_0^t\beta^{\zeta}_{s^-} d\Tilde{N}_s
\end{equation}
where $b^{\xi}$, $b^{\zeta}$, $\sigma^{\xi}$, $\sigma^{\zeta}$, $\beta^{\xi}$
and $\beta^{\zeta}$ are adapted RCLL processes
such that there exists $r>2$ and a constant $C_{\xi,\zeta}$ such that
$\E(\sup_{s \le T} |b^{\xi}_s|^r) + \E(\sup_{s \le T} |b^{\zeta}_s|^r) +
\E(\sup_{s \le T} |\sigma^{\xi}_s|^r) + \E(\sup_{s \le T}
|\sigma^{\zeta}_s|^r) + \E(\sup_{s \le T} |\beta^{\xi}_s|^r) + \E(\sup_{s \le
  T} |\beta^{\zeta}_s|^r) \le C_{\xi,\zeta}$. We also assume $\xi_T=\zeta_T$
a.s., $\xi_t \le \zeta_t$ for all $t \in [0,T]$.
\end{assumption}

\section{Discrete time framework and numerical scheme}
 
\subsection{Discrete time framework}

For the numerical part of the paper, we adopt the framework of
\cite{LMT07} and \cite{DL14}, presented below.

\subsubsection{Random walk approximation of $(W,\tilde{N})$}
For $n \in \mathbb{N}$, we introduce $\delta:=\frac{T}{n}$ and the regular grid
$(t_j)_{j=0,...,n}$ with step size $\delta$ (i.e. $t_j:=j \delta$) to
discretize $[0,T]$. In order to approximate $W$, we introduce the following
random walk
\begin{equation}
\begin{cases}
W_0^n=0,\\
W_t^n=\sqrt{\delta} \sum_{i=1}^{[t/\delta]} e_i^n, 
\end{cases}
\end{equation}
where $e_1^n, e_2^n,...,e_n^n$ are independent identically distributed random
variables with the following symmetric Bernoulli law:
$$\P(e_1^n=1)=\P(e_1^n=-1)=\frac{1}{2}.$$
To approximate $\tilde{N}$, we introduce a second random walk
\begin{equation}
\begin{cases}
\Tilde{N}_0^n=0,\\
\Tilde{N}_t^n=\sum_{i=1}^{[t/\delta]}\eta_i^n, 
\end{cases}
\end{equation}
where $\eta_1^n, \eta_2^n,...,\eta_n^n$ are independent and identically distributed random variables with law 
$$\P(\eta_1^n=\kappa_n-1)=1-\P(\eta_1^n=\kappa_n)=\kappa_n, $$
where $\kappa_n=e^{-\lambda \delta}.$
We assume that both sequences $e_1^n,...,e_n^n$ and $\eta_1^n,
\eta_2^n,...,\eta_n^n$ are defined on the original probability space $(\Omega,
\mathcal{F}, \mathbb{P}).$ The (discrete) filtration in the probability space
is $\mathbb{F}^n:=\{\mathcal{F}_j^n: j=0,...,n \}$ with
$\mathcal{F}_0^n=\{\Omega,\emptyset \}$ and
$\mathcal{F}_j^n=\sigma\{e_1^n,...,e_j^n, \eta_1^n,...,\eta_j^n\}$ for
$j=1,...,n.$

The following result states the convergence of $(W^n,\tilde{N}^n)$ in the
$J_1$-Skorokhod topology. We refer to \cite[Section 3]{LMT07} for more results
on the convergence in probability of $\mF^n$-martingales .

\begin{lemma}(\cite[Lemma3, (III)]{LMT07}
  The couple $(W^n,\tilde{N}^n)$ converges in probability to $(W,\tilde{N})$
  for the $J_1$-Skorokhod topology.
\end{lemma}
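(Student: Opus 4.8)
The statement is Lemma (LMT07, Lemma 3(III)): the pair $(W^n,\tilde N^n)$ converges in probability to $(W,\tilde N)$ in the $J_1$-Skorokhod topology on $D([0,T],\R^2)$. The plan is to treat the two coordinates and then combine them, exploiting the independence of the two random walks. First I would recall that Donsker's invariance principle gives $W^n \Rightarrow W$ in $J_1$ (in distribution), and that a scaled Poisson-type random walk $\tilde N^n \Rightarrow \tilde N$ in $J_1$ as well: here $\eta_i^n$ takes value $\kappa_n-1$ with probability $\kappa_n = e^{-\lambda\delta}$ and value $\kappa_n$ with probability $1-\kappa_n$, so $\E[\eta_i^n]=0$, and the number of "down-by-one" increments up to time $t$ is Binomial$([t/\delta],1-\kappa_n)$, which converges to a Poisson$(\lambda t)$ variable since $[t/\delta](1-\kappa_n)\to \lambda t$; meanwhile the cumulative drift $[t/\delta]\kappa_n \cdot(\kappa_n-1) + \ldots$ is handled by noting $\sum \eta_i^n = -(\text{number of jumps}) + [t/\delta]\kappa_n$, and $[t/\delta]\kappa_n - [t/\delta] \to -\lambda t$ uniformly in $t$, so $\tilde N^n_t$ is asymptotically (number of down-steps)$\cdot(-1) + \lambda t + o(1)$, matching $\tilde N_t = N_t-\lambda t$ in law.

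The key upgrade from convergence in distribution to convergence in probability uses that both random walks are built as \emph{functionals of the very increments generating the limit}, or more precisely that one can realize the coupling so that the discrete objects track the continuous ones. The standard route (as in LMT07) is a Skorokhod-embedding / strong approximation argument: since $e^n_i$ are $\pm1$ symmetric Bernoulli, by the Skorokhod embedding theorem one may place $W^n$ on the same space as $W$ so that $W^n_{t_j} = W_{\tau_j}$ for stopping times $\tau_j$ with $\tau_j \to t_j$ a.s.\ (indeed $\E|\tau_j - t_j|^2$ controllable), giving $\sup_{t}|W^n_t - W_{t}| \to 0$ along a suitable time-change, hence $J_1$-convergence in probability. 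For $\tilde N^n$ one argues similarly, or more simply observes that the down-jump times of $\tilde N^n$ converge a.s.\ to the jump times of $N$ under an appropriate coupling of the $\eta^n_i$ with the Poisson process, and the continuous compensator part $[t/\delta]\kappa_n - t(\text{stuff})$ converges uniformly and deterministically; this yields a time-change $\lambda^n$ with $\sup_t |\lambda^n_t - t|\to 0$ and $\sup_t |\tilde N^n_{\lambda^n_t} - \tilde N_t|\to 0$ in probability, which is exactly $J_1$-convergence in probability of $\tilde N^n$.

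Finally I would assemble the joint statement. Because $e^n$ and $\eta^n$ are independent, and $W$ and $\tilde N$ are independent, the two couplings above can be carried out on a common product space without interference. One must take a \emph{single} time-change $\Lambda^n$ that simultaneously brings $W^n$ close to $W$ and $\tilde N^n$ close to $\tilde N$; this is where a small amount of care is needed, since $J_1$ on the product space $D([0,T],\R^2)$ requires one common parametrization for both coordinates. I would handle this by using the Skorokhod-embedding time-changes for each coordinate and then reconciling them: since each individual time-change $\tau^{W,n}_j, \tau^{N,n}_j \to t_j$ a.s., one can interleave the embedding clocks so that a single increasing bijection $\Lambda^n$ of $[0,T]$ satisfies $\sup_t|\Lambda^n_t - t| \to 0$ while $\sup_t |W^n_{\Lambda^n_t}-W_t| + \sup_t|\tilde N^n_{\Lambda^n_t}-\tilde N_t| \to 0$ in probability. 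That reconciliation of the two embedding clocks into one admissible $J_1$ parametrization is the main obstacle; everything else is Donsker/Skorokhod-embedding bookkeeping. Since the result is quoted verbatim from \cite[Lemma 3, (III)]{LMT07}, in the paper itself one simply cites that reference, and the above is the structure of the argument there.
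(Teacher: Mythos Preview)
The paper gives no proof of this lemma at all: it is stated with a bare citation to \cite[Lemma 3, (III)]{LMT07}, and nothing more. You correctly note this at the end of your proposal, so in that sense your ``proof'' matches the paper's exactly.

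Your accompanying sketch of what the argument in \cite{LMT07} should look like is in the right spirit (Donsker for $W^n$, a Poisson-approximation argument for $\tilde N^n$, and an embedding/coupling to upgrade weak convergence to convergence in probability on a common space), but a couple of details are garbled. The rare event for $\eta^n_i$ is the value $\kappa_n$ (an ``up'' increment of size close to $1$), taken with probability $1-\kappa_n\approx\lambda\delta$, not a ``down-by-one'' step; the clean identity is
\[
\sum_{i\le m}\eta^n_i \;=\; J_m \;-\; m(1-\kappa_n),\qquad m=[t/\delta],
\]
where $J_m$ counts those rare events, and $m(1-\kappa_n)\to\lambda t$, so $\tilde N^n_t\to N_t-\lambda t$. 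Also, the joint $J_1$-convergence of the pair does not in general require a single time-change synchronizing both coordinates: since $W^n$ actually converges \emph{uniformly} in probability (no jumps in the limit), the time-change needed for the $\tilde N^n$ coordinate alone already works for the pair. But again, none of this appears in the present paper, which simply defers to \cite{LMT07}.
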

We recall that the process $\xi^n$ converges in probability to $\xi$ in the
$J_1$-Skorokhod topology if there exists a family
  $(\psi^n)_{n\in \N}$ of one-to-one random time changes from $[0,T]$ to
  $[0,T]$ such that $\sup_{t \in [0,T]}|\psi^n(t)-t| \xrightarrow[n\rightarrow
  \infty]{}0$ almost surely and $\sup_{t \in [0,T]}
|{\xi}^n_{\psi^n(t)}-\xi_t|\xrightarrow[n \rightarrow \infty]{}0$ in probability.

\subsubsection{Martingale representation}

Let $y_{j+1}$ denote a $\mF^n_{j+1}$-measurable random variable. As pointed out in
\cite{LMT07}, we need a set of three strongly orthogonal martingales to
represent the martingale difference $m_{j+1}:=y_{j+1}-\E(y_{j+1}|\mF^n_j)$. We
introduce a third martingale increment sequence
$\{\mu^n_j=e^n_j\eta^n_j,j=0,\cdots,n\}$. In this context there exists a
unique triplet $(z_j,u_j,v_j)$ of $\mF^n_j$-random variables such that
\begin{align*}
  m_{j+1}:=y_{j+1}-\E(y_{j+1}|\mF^n_j)=\sqrt{\delta} z_j e^n_{j+1} + u_j
  \eta^n_{j+1} +v_j \mu^n_{j+1},
\end{align*}
and
\begin{align}\label{eq27}
  \left\{ \begin{array}{l}
      z_j=\frac{1}{\sqrt{\delta}}\E(y_{j+1}e^n_{j+1}| \mF^n_j),\\
      u_j=\frac{\E(y_{j+1}\eta^n_{j+1}|\mF^n_j)}{\E((\eta^n_{j+1})^2|\mF^n_j)}=\frac{1}{\kappa_n(1-\kappa_n)}\E(y_{j+1}\eta^n_{j+1}|
      \mF^n_j),\\
      v_j=\frac{\E(y_{j+1}\mu^n_{j+1}|\mF^n_j)}{\E((\mu^n_{j+1})^2|\mF^n_j)}=\frac{1}{\kappa_n(1-\kappa_n)}\E(y_{j+1}\mu^n_{j+1}|
      \mF^n_j).\\
    \end{array}\right.
\end{align}

The computation of conditional expectations is done in the following way:
\begin{remark}(Computing the conditional expectations)\label{rem:cond_exp} Let $\Phi$ denote a
  function from $\R^{2j+2}$ to $\R$. We use the following formula
  \begin{align*}
    \E(\Phi(e^n_1,\cdots,e^n_{j+1},\eta^n_1,\cdots,\eta^n_{j+1})|\mF^n_j)=&\frac{\kappa_n}{2}\Phi(e^n_1,\cdots,e^n_{j},1,\eta^n_1,\cdots,\eta^n_{j},\kappa_n-1)\notag\\
    &+\frac{\kappa_n}{2}\Phi(e^n_1,\cdots,e^n_{j},-1,\eta^n_1,\cdots,\eta^n_{j},\kappa_n-1)\notag\\
    &+\frac{1-\kappa_n}{2}\Phi(e^n_1,\cdots,e^n_{j},1,\eta^n_1,\cdots,\eta^n_{j},\kappa_n)\notag\\
    &+\frac{1-\kappa_n}{2}\Phi(e^n_1,\cdots,e^n_{j},-1,\eta^n_1,\cdots,\eta^n_{j},\kappa_n).
  \end{align*}
\end{remark}

\subsection{Reflected schemes}

The barriers $\xi$ and $\zeta$ given in Assumption \ref{hypo2} are
approximated in the following way: for all $k \in \{1,\cdots,n\}$

\begin{equation}\label{xixi}
\xi^n_k=\xi_0+\sum_{i=0}^{k-1}b^{\xi}_{t_i}\delta+\sum_{i=0}^{k-1}\sigma^{\xi}_{t_i}\sqrt{\delta}e_{i+1}^n+\sum_{i=0}^{k-1}\beta^{\xi}_{t_i}\eta_{i+1}^n,
\end{equation}
\begin{equation}
\zeta^n_k=\zeta_0+\sum_{i=0}^{k-1}b^{\zeta}_{t_i}\delta+\sum_{i=0}^{k-1}\sigma^{\zeta}_{t_i}\sqrt{\delta}e_{i+1}^n+\sum_{i=0}^{k-1}\beta^{\zeta}_{t_i}\eta_{i+1}^n.
\end{equation}

\begin{lemma}\label{lem6}
  Under Assumption \ref{hypo2}, there exists a constant
  $C_{\xi,\zeta,T,\lambda}$ depending on $C_{\xi,\zeta}$, $T$ and $\lambda$ such
  that 
  \begin{align*}
    & \mbox{(i)} \sup_n \max_j
    \E(|\xi^n_j|^r)+\sup_n \max_j \E(|\zeta^n_j|^r)+\sup_{t \le T}
    \E(|\xi_t|^r)  +\sup_{t \le T}
    \E(|\zeta_t|^r) \le C_{\xi,\zeta,T,\lambda}
    \\
    & \mbox{(ii) $\xi^n$ (resp. $\zeta^n$) converges in probability to $\xi$
      (resp. $\zeta$) in $J_1$-Skorokhod topology}. 
  \end{align*}
\end{lemma}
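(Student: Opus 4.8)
\emph{Proof strategy.} The bound (i) splits into estimates for the continuous processes $\xi,\zeta$ and for the discrete sequences $\xi^n,\zeta^n$; I would treat $\xi$, the case of $\zeta$ being identical. For the continuous barrier, write $\xi_t-\xi_0$ as the sum of the three integrals in \eqref{barrierxi} and apply, respectively, Jensen's inequality to the drift term, the Burkholder--Davis--Gundy inequality to the Brownian integral, and Kunita's inequality to the compensated Poisson integral; this yields
$$\sup_{t\le T}\E(|\xi_t|^r)\le C_{r,\lambda,T}\Big(|\xi_0|^r+\E\big(\sup_{s\le T}|b^{\xi}_s|^r\big)+\E\big(\sup_{s\le T}|\sigma^{\xi}_s|^r\big)+\E\big(\sup_{s\le T}|\beta^{\xi}_s|^r\big)\Big)<\infty$$
by Assumption \ref{hypo2}. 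This part is routine and I would only track the constants.

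For the discrete bound I would work with the enlarged filtration $\mathcal{G}^n_j:=\mathcal{F}_{t_j}\vee\mathcal{F}^n_j$, with respect to which the sampled coefficients $b^{\xi}_{t_i},\sigma^{\xi}_{t_i},\beta^{\xi}_{t_i}$ are $\mathcal{G}^n_i$-measurable, while by construction of the random walks $e^n_{i+1}$ and $\eta^n_{i+1}$ are centered and independent of $\mathcal{G}^n_i$. Splitting $\xi^n_j=\xi_0+A^n_j+M^n_j+L^n_j$ according to the three sums in \eqref{xixi}: the drift part is handled deterministically, $|A^n_j|\le\delta\sum_{i=0}^{j-1}\sup_{s\le T}|b^{\xi}_s|\le T\sup_{s\le T}|b^{\xi}_s|$, so $\max_j\E(|A^n_j|^r)\le T^r\,\E(\sup_{s\le T}|b^{\xi}_s|^r)$; the process $M^n$ is a martingale in $(\mathcal{G}^n_j)_j$, and the discrete Burkholder--Davis--Gundy inequality together with $(e^n_{i+1})^2=1$ and a power-mean inequality (exponent $r/2\ge1$) give $\max_j\E(|M^n_j|^r)\le C_r\,\E\big((\delta\sum_{i=0}^{n-1}(\sigma^{\xi}_{t_i})^2)^{r/2}\big)\le C_r\,T^{r/2}\,\E(\sup_{s\le T}|\sigma^{\xi}_s|^r)$.

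The martingale $L^n_j=\sum_{i=0}^{j-1}\beta^{\xi}_{t_i}\eta^n_{i+1}$ is the delicate term and I expect it to be the main obstacle: a naive estimate produces uncancelled factors $n^{r/2}$, precisely because $\E((\eta^n_{i+1})^2)=\kappa_n(1-\kappa_n)$ is only of order $\delta$. The plan is to invoke the Burkholder--Rosenthal inequality for martingale differences ($r\ge2$),
$$\max_j\E(|L^n_j|^r)\le C_r\left(\E\Big[\Big(\sum_{i=0}^{n-1}(\beta^{\xi}_{t_i})^2\,\E\big((\eta^n_{i+1})^2\mid\mathcal{G}^n_i\big)\Big)^{r/2}\Big]+\sum_{i=0}^{n-1}\E\big(|\beta^{\xi}_{t_i}\eta^n_{i+1}|^r\big)\right),$$
and then to use, on the one hand, $\E((\eta^n_{i+1})^2\mid\mathcal{G}^n_i)=\kappa_n(1-\kappa_n)\le\lambda\delta$, so that the first term is $\le(\lambda\delta)^{r/2}\E\big((\sum_i(\beta^{\xi}_{t_i})^2)^{r/2}\big)\le(\lambda\delta)^{r/2}n^{r/2}\,\E(\sup_{s\le T}|\beta^{\xi}_s|^r)=(\lambda T)^{r/2}\,\E(\sup_{s\le T}|\beta^{\xi}_s|^r)$, and, on the other hand, $|\eta^n_{i+1}|\le1$ together with $r\ge2$, so that $\E(|\eta^n_{i+1}|^r)\le\E((\eta^n_{i+1})^2)\le\lambda\delta$ and, by the independence of $\eta^n_{i+1}$ from $\mathcal{G}^n_i$, the second term is $\le\lambda\delta\sum_i\E(|\beta^{\xi}_{t_i}|^r)\le\lambda T\,\E(\sup_{s\le T}|\beta^{\xi}_s|^r)$. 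Collecting the three pieces and using Assumption \ref{hypo2} gives (i); this $L^n$ step is essentially the only place where $r>2$ is used, and Rosenthal's inequality is the right tool because it exploits only the ``adapted coefficient $\times$ increment independent of the past'' structure, so that no global independence between the barrier coefficients and the random walks is needed.

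For part (ii) I would combine the $J_1$-convergence in probability of $(W^n,\tilde N^n)$ to $(W,\tilde N)$ (the Lemma quoted above) with the results of \cite[Section 3]{LMT07} on convergence of $\mathbb{F}^n$-stochastic integrals, applied term by term: $\sum_{i<[t/\delta]}b^{\xi}_{t_i}\delta\to\int_0^t b^{\xi}_s\,ds$ uniformly in $t$ in probability (by RCLL-ness of $b^{\xi}$, dominated convergence and the moment bound from (i)), $\sum_{i<[t/\delta]}\sigma^{\xi}_{t_i}\sqrt\delta\,e^n_{i+1}\to\int_0^t\sigma^{\xi}_s\,dW_s$, and $\sum_{i<[t/\delta]}\beta^{\xi}_{t_i}\eta^n_{i+1}\to\int_0^t\beta^{\xi}_{s^-}\,d\tilde N_s$ in $J_1$ in probability. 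Only the last limit has jumps, located at the jump times of $N$, so the three convergences assemble using the single family of time changes $\psi^n$ produced by the $\tilde N^n\to\tilde N$ convergence: for the two continuous summands $J_1$-convergence is equivalent to uniform convergence, and each of them, evaluated at $\psi^n(t)$, still converges uniformly in $t$ to its limit evaluated at $t$ since $\sup_t|\psi^n(t)-t|\to0$, while the jump summand converges by the very choice of $\psi^n$. Summing, $\sup_t|\xi^n_{\psi^n(t)}-\xi_t|\to0$ in probability with $\sup_t|\psi^n(t)-t|\to0$ a.s., i.e.\ $\xi^n\to\xi$ in the $J_1$-Skorokhod topology in probability; the argument for $\zeta^n$ is the same.
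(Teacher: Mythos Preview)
Your argument for (i) is essentially the paper's own: the paper's proof says only that ``(i) ensues from Burkh\"older--Davis--Gundy and Rosenthal inequalities'', and you have spelled out exactly that, including the key observation that Rosenthal is needed for the $\eta^n$-sum because $\E[(\eta^n_{i+1})^2]\asymp\delta$.

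For (ii) the approaches differ. The paper invokes a black box, citing \cite[Theorem~6.22 and Corollary~6.29]{JS_03}, i.e.\ the general Jacod--Shiryaev stability theorems for stochastic integrals under semimartingale convergence (the UT/P-UT framework). Your route is a hands-on decomposition into drift, Brownian and Poisson parts, treated separately and then recombined using a single family of time changes $\psi^n$ coming from $\tilde N^n\to\tilde N$. This is correct in spirit and has the advantage of making explicit why the sum of a uniformly convergent piece and a $J_1$-convergent piece is again $J_1$-convergent. One caveat: you appeal to \cite[Section~3]{LMT07} for the convergence of the two martingale sums, but those results are stated for $\mathbb{F}^n$-adapted integrands, whereas your integrands $\sigma^{\xi}_{t_i},\beta^{\xi}_{t_i}$ are $\mathcal{F}_{t_i}$-measurable (adapted to the enlarged filtration $\mathcal{G}^n_i$ you introduced in part (i), not to $\mathcal{F}^n_i$). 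The right reference to close this gap is precisely the Jacod--Shiryaev framework the paper cites, which requires only that the piecewise-constant integrands converge and that the integrators $(W^n,\tilde N^n)$ form a good (UT) sequence---both of which hold here. So your argument is a valid unpacking of the paper's citation, modulo replacing the LMT07 reference by the appropriate JS result.
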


\begin{proof} $(i)$ ensues from Burkhölder-Davis-Gundy and Rosenthal
  inequalities, and $(ii)$ ensues from \cite[Theorem 6.22 and Corollary 6.29]{JS_03}.
\end{proof}

In the following Section we introduce the implicit reflected scheme, which is
an intermediate scheme useful to prove the convergence of the reflected scheme
\eqref{eqintro1}.

\subsubsection{Implicit reflected scheme}

After the discretization of the time interval, our discrete reflected BSDEs with two RCLL barriers on small interval $[t_j, t_{j+1}[,$ for $0 \leq j \leq n-1$ is 
\begin{equation}\label{scheme2}
\begin{cases}
y_j^n=y_{j+1}^n+g(t_j,y_j^n,z_j^n,u_j^n)\delta+a_j^n-k_j^n-z_j^n \sqrt \delta \varepsilon_{j+1}^n-u_j^n \eta_{j+1}^n-v_j^n \mu_{j+1}^n,\\
a_j^n \geq 0,\; k_j^n \geq 0,\; a_j^n k_j^n=0,\; 
\xi_j^n \leq y_j^n \leq \zeta_j^n,\; (y_j^n-\xi_j^n)a_j^n=(y_j^n-\zeta_j^n)k_j^n=0.
\end{cases}
\end{equation}
with terminal condition $y_n^n=\xi^n_n.$ By taking the conditional expectation
in \eqref{scheme2} w.r.t. $\mathcal{F}^n_j$, we get

\begin{equation*}
  (\mathcal{S}_1)\begin{cases}
    y^n_n=\xi^n_n,\\
y_j^n=\E[y_{j+1}^n|\mathcal{F}_j^n]+g(t_j,y_j^n,z_j^n,u_j^n)\delta+a_j^n-k_j^n,\\
a_j^n \geq 0,\; k_j^n \geq 0,\; a_j^n k_j^n=0, \\
\xi_j^n \leq y_j^n \leq \zeta_j^n,\; (y_j^n-\xi_j^n)a_j^n=(y_j^n-\zeta_j^n)k_j^n=0.
\end{cases}
\end{equation*}

\begin{lemma}For $\delta$ small enough, $(\mathcal{S}_1)$ is equivalent to
\begin{equation*}
  (\mathcal{S}_2)\begin{cases}
    y^n_n=\xi^n_n,\\
    y_j^n=\Psi^{-1}(\E[y_{j+1}^n|\mathcal{F}_j^n]+a_j^n-k_j^n),\\
    a_j^n=(\E[y_{j+1}^n|\mathcal{F}_j^n]+g(t_j,\xi_j^n,z_j^n,u_j^n)\delta-\xi_j^n)^{-},\\
    k_j^n=(\E[y_{j+1}^n|\mathcal{F}_j^n]+g(t_j,\zeta_j^n,z_j^n,u_j^n)\delta-\zeta_j^n)^{+},\\
\end{cases}
\end{equation*}
where $\Psi(y):=y-g(t_j,y,z_j^n,u^n_j)\delta.$
\end{lemma}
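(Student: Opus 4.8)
The plan is to fix $\delta$ small enough that $C_g\delta<1$ and to argue by backward induction on $j$, reducing the claim to a pointwise (in $\omega$) one-step equivalence. Both $(\mathcal{S}_1)$ and $(\mathcal{S}_2)$ carry the same terminal value $y^n_n=\xi^n_n$, so suppose $y^n_{j+1}$ has been constructed. Then $z^n_j,u^n_j,v^n_j$ are uniquely determined by the martingale representation \eqref{eq27}; in particular they are $\mathcal{F}^n_j$-measurable and coincide for the two systems, and, setting $P:=\E[y^n_{j+1}|\mathcal{F}^n_j]$, all the remaining relations in $(\mathcal{S}_1)$ and in $(\mathcal{S}_2)$ become relations between the $\mathcal{F}^n_j$-measurable triple $(y^n_j,a^n_j,k^n_j)$ and the data $(P,z^n_j,u^n_j,\xi^n_j,\zeta^n_j)$. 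It therefore suffices to prove that the two corresponding one-step problems are equivalent $\omega$ by $\omega$.

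The elementary fact underlying everything is that, for $C_g\delta<1$, the map $\Psi(y)=y-g(t_j,y,z^n_j,u^n_j)\delta$ is, by the Lipschitz continuity of $g$ in $y$, a strictly increasing bi-Lipschitz bijection of $\R$, so $\Psi^{-1}$ is well defined and increasing. With this, the first line of $(\mathcal{S}_1)$, namely $y^n_j=P+g(t_j,y^n_j,z^n_j,u^n_j)\delta+a^n_j-k^n_j$, is equivalent to $\Psi(y^n_j)=P+a^n_j-k^n_j$, i.e. to $y^n_j=\Psi^{-1}(P+a^n_j-k^n_j)$, which is already the second line of $(\mathcal{S}_2)$ once $a^n_j$ and $k^n_j$ have been identified.

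To pass from $(\mathcal{S}_1)$ to $(\mathcal{S}_2)$ I would distinguish the three regimes permitted by $a^n_jk^n_j=0$. If $a^n_j>0$, then $k^n_j=0$ and $(y^n_j-\xi^n_j)a^n_j=0$ forces $y^n_j=\xi^n_j$; substituting into the first line yields $a^n_j=\xi^n_j-g(t_j,\xi^n_j,z^n_j,u^n_j)\delta-P=(P+g(t_j,\xi^n_j,z^n_j,u^n_j)\delta-\xi^n_j)^{-}$, while $\xi^n_j\le\zeta^n_j$ and the monotonicity of $\Psi$ give $P=\Psi(\xi^n_j)-a^n_j<\Psi(\zeta^n_j)$, so that $(P+g(t_j,\zeta^n_j,z^n_j,u^n_j)\delta-\zeta^n_j)^{+}=0=k^n_j$; the case $k^n_j>0$ is symmetric; and if $a^n_j=k^n_j=0$ the constraint $\xi^n_j\le y^n_j=\Psi^{-1}(P)\le\zeta^n_j$ is just $\Psi(\xi^n_j)\le P\le\Psi(\zeta^n_j)$, which makes both relevant parts vanish, matching $(\mathcal{S}_2)$. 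For the converse, starting from the explicit formulas of $(\mathcal{S}_2)$: one has $a^n_j,k^n_j\ge0$ by construction, and they cannot both be positive, since that would force $\Psi(\zeta^n_j)<P<\Psi(\xi^n_j)$, contradicting $\xi^n_j\le\zeta^n_j$; hence $a^n_jk^n_j=0$, the relation $\Psi(y^n_j)=P+a^n_j-k^n_j$ recovers the first line of $(\mathcal{S}_1)$, and checking the three regimes one finds $y^n_j=\xi^n_j$, $y^n_j=\zeta^n_j$, or $y^n_j\in[\xi^n_j,\zeta^n_j]$ respectively, which yields $\xi^n_j\le y^n_j\le\zeta^n_j$ together with $(y^n_j-\xi^n_j)a^n_j=(y^n_j-\zeta^n_j)k^n_j=0$.

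The one point that needs care is the mutual exclusivity of the regimes $\{a^n_j>0\}$ and $\{k^n_j>0\}$: this is precisely where the smallness of $\delta$ (which makes $\Psi$ increasing, hence $\Psi^{-1}$ a genuine increasing inverse) and the ordering $\xi^n_j\le\zeta^n_j$ enter, and it is essentially the only place where they are used; the rest of the argument is bookkeeping with positive and negative parts together with the routine measurability and induction structure described above.
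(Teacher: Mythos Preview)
Your proof is correct and follows essentially the same route as the paper: you use the Lipschitz property of $g$ to make $\Psi$ a strictly increasing bijection for $C_g\delta<1$, and then argue both implications by case analysis on the signs of $a^n_j,k^n_j$ (equivalently, on whether $y^n_j$ hits one of the barriers), exploiting the monotonicity of $\Psi$ together with $\xi^n_j\le\zeta^n_j$. The only cosmetic difference is that the paper splits the forward implication according to $\{y^n_j=\xi^n_j\}$ versus $\{y^n_j>\xi^n_j\}$ and treats the degenerate case $\xi^n_j=\zeta^n_j$ separately, whereas your three-regime split on $(a^n_j,k^n_j)$ handles that case automatically; your explicit backward-induction framing and the remark that $z^n_j,u^n_j,v^n_j$ are determined by $y^n_{j+1}$ alone are also helpful clarifications that the paper leaves implicit.
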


\begin{proof} For $\delta$ small enough, $\Psi$ is invertible because
  the Lipschitz property of $g$ leads to $(\Psi(y)-\Psi(y'))(y-y') \ge
  (1-\delta C_g)(y-y')^2 > 0$ for any $y\neq y'$. \\
  We first prove that $(\mathcal{S}_1)$ implies $(\mathcal{S}_2)$. Let us
  firstly assume that $\forall j \le n-1$,
  $\xi^n_j <\zeta^n_j$. On the set
  $\{y^n_j=\xi^n_j\}$ we have $k^n_j=0$, then
  $a^n_j=\Psi(\xi^n_j)-\E[y_{j+1}^n|\mathcal{F}_j^n]=(\E[y_{j+1}^n|\mathcal{F}_j^n]-\Psi(\xi^n_j))^-$
  (since
  $\E[y_{j+1}^n|\mathcal{F}_j^n]-\Psi(\xi^n_j)=\Psi(y^n_j)-\Psi(\xi^n_j)-a^n_j
  \le 0)$ and on
  $\{y^n_j>\xi^n_j\}$ we have $a^n_j =0$,
  $(\E[y_{j+1}^n|\mathcal{F}_j^n]-\Psi(\xi^n_j)=\Psi(y^n_j)-\Psi(\xi^n_j)+k^n_j>0$
  (thanks to the monotonicity of $\Psi$)).
  Then, $a^n_j=(\E[y_{j+1}^n|\mathcal{F}_j^n]-\Psi(\xi^n_j))^-$.
  The same
  type of proof leads to the fourth line of $(\mathcal{S}_2)$. If there exists
  $j\le n-1$ such that
  $\xi^n_j=\zeta^n_j$, we get $\xi^n_j=\zeta^n_j= y^n_j$. Then, we have
  $a^n_j=0$ or $k^n_j=0$. If both are null, we get
  $\Psi(y^n_j)=\E[y_{j+1}^n|\mathcal{F}_j^n]=\Psi(\xi^n_j)=\Psi(\zeta^n_j)$. This
  coincides with the definitions of $a^n_j$ and $k^n_j$ given in
  $(\mathcal{S}_2)$. If $a^n_j>0$, $k^n_j=0$ and we get
  $a^n_j=\Psi(y^n_j)-\E[y_{j+1}^n|\mathcal{F}_j^n]=\Psi(\xi^n_j)-\E[y_{j+1}^n|\mathcal{F}_j^n]$,
  then $a^n_j=(\E[y_{j+1}^n|\mathcal{F}_j^n]-\Psi(\xi^n_j))^-$. Conversely,
  assume $(\mathcal{S}_2)$, let us prove $a^n_j
  k^n_j=0$, $(y^n_j-\xi^n_j)a^n_j=(y^n_j-\zeta^n_j)k^n_j=0$ and $\xi^n_j \le y^n_j \le \zeta^n_j$. If $a^n_j>0$, we get $\Psi(\zeta^n_j)\ge
  \Psi(\xi^n_j)>\E[y_{j+1}^n|\mathcal{F}_j^n]$, then $k^n_j=0$. Let us prove
  that $(y^n_j-\xi^n_j)a^n_j=0$. If $a^n_j>0$, $\Psi(y^n_j)=\E[y_{j+1}^n|\mathcal{F}_j^n]+a^n_j=\Psi(\xi^n_j)$. Since
  $\Psi$ is a one to one map, we get $y^n_j=\xi^n_j$. The same argument
  holds to prove $(y^n_j-\zeta^n_j)k^n_j=0$. Let us prove that $\xi^n_j \le
  y^n_j $. To do so, assume that $y^n_j < \xi^n_j$. In this case $a^n_j=k^n_j=0$,  which gives $\Psi( \xi^n_j) \le \E[y_{j+1}^n|\mathcal{F}_j^n]$, by
  definition of $a^n_j$. Then $\Psi(y^n_j)=\E[y_{j+1}^n|\mathcal{F}_j^n] \ge
  \Psi(\xi^n_j)$. $\Psi$ being a non decreasing function, this leads to absurdity.
\end{proof}

We also introduce the continuous time version $({Y}_t^{n},
{Z}_t^{n}, {U}_t^{n}, {A}_t^{n},
{K}_t^{n})_{ 0 \leq t \leq T}$ of $(y^n_j,z^n_j,u^n_j,a^n_j,k^n_j)_{j \le n}$:
\begin{align}\label{eq34}
  {Y}_t^{n}:={y}_{[t/\delta]}^{n}, {Z}_t^{n}:={z}_{[t/\delta]}^{n},
  {U}_t^{n}:={u}_{[t/\delta]}^{n}, 
  {A}_t^{n}:=\sum_{i=0}^{[t/\delta]}{a}_i^{n}, {K}_t^{n}:=
  \sum_{i=0}^{[t/\delta]}{k}_i^{n}.
\end{align}
In the following $\Theta^n:=(Y^n,Z^n,U^n,A^n-K^n)$.

\subsubsection{Explicit reflected scheme}

The explicit reflected scheme is introduced by replacing $y_j^n$ by
$\E[\overline{y}_{j+1}^n| \mathcal{F}_j^n]$ in $g$. We obtain
\begin{equation}\label{scheme2_exp}
\begin{cases}
\ov{y}_j^n=\ov{y}_{j+1}^n+g(t_j,\E[\overline{y}_{j+1}^n|\mathcal{F}_j^n],
\overline{z}_j^n,\overline{u}_j^n)\delta+\overline{a}_j^n-\overline{k}_j^n-\ov{z}_j^n
\sqrt \delta \varepsilon_{j+1}^n-\ov{u}_j^n \eta_{j+1}^n-\ov{v}_j^n
\mu_{j+1}^n,\\
\overline{a}_j^n \geq 0, \; \overline{k}_j^n \geq 0,\; \overline{a}_j^n \overline{k}_j^n=0,\\
\xi_j^n \leq \overline{y}_j^n \leq \zeta_j^n,\; (\overline{y}_j^n-\xi_j^n)\overline{a}_j^n=(\overline{y}_j^n-\zeta_j^n)\overline{k}_j^n=0.\\
\end{cases}
\end{equation}
with terminal condition $\ov{y}_n^n=\xi^n_n.$
By taking the conditional expectation in $\eqref{scheme2_exp}$ with respect to
$\mathcal{F}_j^n$, we derive that:

\begin{equation*}
  (\overline{\mathcal{S}}_1)\begin{cases}
    \ov{y}^n_n=\xi^n_n,\\
    \overline{y}_j^n=\E[\overline{y}_{j+1}^n|\mathcal{F}_j^n]+g(t_j,\E[\overline{y}_{j+1}^n|\mathcal{F}_j^n], \overline{z}_j^n, \overline{u}_j^n)\delta+\overline{a}_j^n-\overline{k}_j^n\,\
    \overline{a}_j^n \geq 0,\; \overline{k}_j^n \geq 0,\; \overline{a}_j^n \overline{k}_j^n=0,\\
    \xi_j^n \leq \overline{y}_j^n \leq \zeta_j^n,\; (\overline{y}_j^n-\xi_j^n)\overline{a}_j^n=(\overline{y}_j^n-\zeta_j^n)\overline{k}_j^n=0.\\
  \end{cases}
\end{equation*}
As for the implicit reflected scheme, we get that $(\overline{\mathcal{S}}_1)$
is equivalent to $(\overline{\mathcal{S}}_2)$
\begin{align*}
 (\overline{\mathcal{S}}_2) \begin{cases}
    \ov{y}^n_n=\xi^n_n,\\
    \overline{y}_j^n=\E[\overline{y}_{j+1}^n|\mathcal{F}_j^n]+g(t_j,\E[\overline{y}_{j+1}^n|\mathcal{F}_j^n], \overline{z}_j^n, \overline{u}_j^n)\delta+\overline{a}_j^n-\overline{k}_j^n,\\
    \overline{a}_j^n=(\E[\overline{y}_{j+1}^n|\mathcal{F}_j^n]+g(t_j,\E[\overline{y}_{j+1}^n|\mathcal{F}_j^n], \overline{z}_j^n, \overline{u}_j^n)\delta-\xi_j^n)^{-},\\
    \overline{k}_j^n=(\E[\overline{y}_{j+1}^n|\mathcal{F}_j^n]+g(t_j,\E[\overline{y}_{j+1}^n|\mathcal{F}_j^n], \overline{z}_j^n, \overline{u}_j^n)\delta-\zeta_j^n)^{+}.
  \end{cases}
\end{align*}

We also introduce the continuous time version $(\overline{Y}_t^{n},
\overline{Z}_t^{n}, \overline{U}_t^{n}, \overline{A}_t^{n},
\overline{K}_t^{n})_{ 0 \leq t \leq T}$ of
$(\ov{y}^n_j,\ov{z}^n_j,\ov{u}^n_j,\ov{a}^n_j,\ov{k}^n_j)_{j\le n}$:
\begin{align}\label{eq33}
  \ov{Y}_t^{n}:=\ov{y}_{[t/\delta]}^{n}, \ov{Z}_t^{n}:=\ov{z}_{[t/\delta]}^{n},
  \ov{U}_t^{n}:=\ov{u}_{[t/\delta]}^{n}, 
  \ov{A}_t^{n}:=\sum_{i=0}^{[t/\delta]}\ov{a}_i^{n}, \ov{K}_t^{n}:=
  \sum_{i=0}^{[t/\delta]}\ov{k}_i^{n}.
\end{align}

In the following
$\ov{\Theta}^n:=(\ov{Y}^n,\ov{Z}^n,\ov{U}^n,\ov{A}^n-\ov{K}^n)$ and $\ov{\alpha}^n:=\ov{A}^n-\ov{K}^n$.

\subsection{Implicit penalization scheme}\label{sect:imp_sch}

In this Section we recall the {\it implicit penalization scheme} introduced in
\cite{DL14}. The penalization is represented by the parameter $p$. As the
implicit reflected scheme, this scheme will be useful to prove the convergence
of the explicit reflected scheme. For all $j$ in $\{0,\cdots,n-1\}$ we have

\begin{equation}\label{discrete}
\begin{cases}
y_{j}^{p,n}=y_{j+1}^{p,n}+g(t_j, y_j^{p,n}, z_j^{p,n}, u_j^{p,n}) \delta+a_j^{p,n}-k_j^{p,n}-(z_j^{p,n}\sqrt{\delta}e_{j+1}^n+u_j^{p,n}\eta_{j+1}^n+v_j^{p,n}\mu_{j+1}^n),\\
a_j^{p,n}=p \delta(y_j^{p,n}-\xi_j^n)^{-} \text{,   }k_j^{p,n}=p \delta(\zeta_j^n-y_j^{p,n})^{-},\\
y_n^{p,n}:=\xi^n_n.
\end{cases}
\end{equation}

Following \eqref{eq27}, the triplet $(z_j^{p,n}, u_j^{p,n}, v_j^{p,n})$ can be
computed as follows 

\begin{align*}
  \left\{ \begin{array}{l}
      z_j^{p,n}=\frac{1}{\sqrt{\delta}}\E(y^{p,n}_{j+1}e^n_{j+1}| \mF^n_j),\\
      u_j^{p,n}=\frac{1}{\kappa_n(1-\kappa_n)}\E(y^{p,n}_{j+1}\eta^n_{j+1}|
      \mF^n_j),\\
      v^{p,n}_j=\frac{1}{\kappa_n(1-\kappa_n)}\E(y^{p,n}_{j+1}\mu^n_{j+1}|
      \mF^n_j).\\
    \end{array}\right.
\end{align*}
 Taking the conditional expectation w.r.t. $\mF^n_j$ in
\eqref{discrete}, we get
\begin{align*}
  \begin{cases}
    y^{p,n}_j=(\Psi^{p,n})^{-1}(\E(y^{p,n}_{j+1}|\mF^n_j)),\\
    a^{p,n}_j=p\delta (y^{p,n}_j-\xi^n_j)^-; \,
    k^{p,n}_j=p\delta(\zeta^n_j-y^{p,n}_j)^-,\\
    z_j^{p,n}=\frac{1}{\sqrt{\delta}}\E(y^{p,n}_{j+1}e^n_{j+1}| \mF^n_j),\\
      u_j^{p,n}=\frac{1}{\kappa_n(1-\kappa_n)}\E(y^{p,n}_{j+1}\eta^n_{j+1}|
      \mF^n_j),\\
      \end{cases}
\end{align*}
where $\Psi^{p,n}(y)=y-g(j\delta, y, z_j^{p,n},
u_j^{p,n})\delta-p\delta(y-\xi_j^n)^{-}+p\delta (\zeta_j^n-y)^{-}$.

We also introduce the continuous time version $(Y_t^{p,n}, Z_t^{p,n}, U_t^{p,n}, A_t^{p,n}, K_t^{p,n})_{ 0 \leq t \leq T}$ of the solution
of the discrete equation \eqref{discrete}:
\begin{align}\label{eq32}
  Y_t^{p,n}:=y_{[t/\delta]}^{p,n}, Z_t^{p,n}:=z_{[t/\delta]}^{p,n},
  U_t^{p,n}:=u_{[t/\delta]}^{p,n}, 
  A_t^{p,n}:=\sum_{i=0}^{[t/\delta]}a_i^{p,n}, K_t^{p,n}:=
  \sum_{i=0}^{[t/\delta]}k_i^{p,n},
\end{align}
and $\alpha^{p,n}:=A^{p,n}-K^{p,n}$.
 The following result ensues from \cite[Theorem 4.1 and Proposition 4.2]{DL14}.
  \begin{theorem}\label{main_thm}
Assume that Assumption \ref{hypo2} holds and $g$ is a Lipschitz driver satisfying
Assumption \ref{hypo1}. The sequence
$({Y}^{p,n}, {Z}^{p,n}, {U}^{p,n})$ defined by \eqref{eq32}
converges to $(Y,Z,U)$, the solution of the DRBSDE \eqref{eqintro}, in the
following sense: $\forall r \in [1,2[$
\begin{align}\label{conv1}
\lim_{p \rightarrow \infty} \lim_{n \rightarrow
  \infty}\left( \mathbb{E}\left[\int_0^T|{Y}_{s}^{p,n}-Y_s|^2ds\right] +
\mathbb{E}\left[\int_0^T|{Z}_{s}^{p,n}-Z_s|^rds\right]+ \mathbb{E}\left[\int_0^T|{U}_{s}^{p,n}-U_s|^rds\right]\right) = 0.
\end{align}
Moreover, ${Z}^{p,n}$ (resp. ${U}^{p,n}$) weakly converges in $\H^2$ to $Z$
(resp. to $U$) and for $0 \leq t \leq T$, ${\alpha}_{\psi^n(t)}^{p,n}$ converges weakly to  $\alpha_t$ in
$L^2(\mathcal{F}_T)$ as $n \rightarrow \infty$ and $p \rightarrow
\infty$, where $(\psi^n)_{n \in \N}$ is a one-to-one random map from $[0,T]$
to $[0,T]$ such that $\sup_{t \in [0,T]} |\psi^n(t)-t|\xrightarrow[n
  \rightarrow \infty]{} 0$ a.s.. 
\end{theorem}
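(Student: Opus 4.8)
The plan is to deduce Theorem \ref{main_thm} directly from the already-established convergence results in \cite{DL14}, treating it as essentially a translation/assembly statement rather than a fresh proof. The first step is to invoke \cite[Theorem 4.1]{DL14}, which should give the convergence of the discrete penalized scheme $(Y^{p,n},Z^{p,n},U^{p,n})$ (with $n\to\infty$ first) to the continuous-time penalized BSDE $(Y^p,Z^p,U^p)$, in the appropriate norms; then invoke \cite[Proposition 4.2]{DL14} (or the analogous convergence result for penalized DRBSDEs, e.g.\ along the lines of the penalization approach of \cite{CK96} adapted to jumps) to pass $p\to\infty$ and obtain convergence of $(Y^p,Z^p,U^p)$ to $(Y,Z,U)$, the solution of the DRBSDE \eqref{eqintro}. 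Combining these two limits in the stated iterated order $\lim_{p}\lim_{n}$ yields \eqref{conv1}. Here one must be slightly careful that the $n$-limit produces convergence \emph{for each fixed $p$} in a topology strong enough (the $L^2$-in-time norm for $Y$, and $L^r$-in-time for $Z,U$ with $r<2$) that the subsequent $p$-limit can be applied; since both cited results are phrased in exactly these norms, the composition is immediate.

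Second, I would address the weak-convergence assertions. For the martingale integrands, $Z^{p,n}$ and $U^{p,n}$ are bounded in $\H^2$ uniformly in $n$ (this uniform bound is part of the a priori estimates in \cite{DL14}, following from the standard BSDE energy estimates together with Lemma \ref{lem6}(i) controlling the barriers), so along subsequences they converge weakly in the Hilbert space $\H^2$; identifying the limit as $Z^p$ (resp.\ $U^p$) and then $Z$ (resp.\ $U$) is forced by the strong $L^r$-convergence already obtained, which pins down the weak limit uniquely, so the whole sequence converges weakly. For the finite-variation part $\alpha^{p,n}=A^{p,n}-K^{p,n}$, the argument is the one used for $\alpha$ in \cite{DL14}: from \eqref{scheme2} (in its penalized form \eqref{discrete}) one reads off $\alpha^{p,n}$ as a difference of the $Y$-process, a Riemann sum of the driver, and a stochastic integral; each of these converges (the stochastic-integral term using the $J_1$-Skorokhod convergence of $(W^n,\tilde N^n)$ and of the integrands, evaluated through the time-change $\psi^n$), giving weak convergence of $\alpha^{p,n}_{\psi^n(t)}$ in $L^2(\mathcal F_T)$ to $\alpha_t$, with $\psi^n$ the time change already supplied by the Skorokhod convergence and satisfying $\sup_t|\psi^n(t)-t|\to 0$ a.s.

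Third, a minor but necessary point: one should check that Assumptions \ref{hypo1} and \ref{hypo2} are exactly the hypotheses under which \cite[Theorem 4.1 and Proposition 4.2]{DL14} are stated (Assumption \ref{hypo1} ensures the comparison theorem for BSDEs with jumps, which underpins the monotone convergence of the penalized approximations as $p\to\infty$; Assumption \ref{hypo2} with $r>2$ gives the integrability and $J_1$-convergence of the discretized barriers via Lemma \ref{lem6}). Provided these match, the theorem follows by citation; if there is any gap — for instance if \cite{DL14} states the $n$-limit only for a weaker norm on $Z,U$ — one would need to upgrade it, which is where most of the technical work would sit.

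I expect the main obstacle to be purely bookkeeping rather than conceptual: ensuring the two cited convergence modes are compatible enough to be composed in the iterated limit, and in particular that the $n\to\infty$ convergence of $Z^{p,n},U^{p,n}$ holds in the $L^r$-in-time sense (not merely weakly) for each fixed $p$, so that the $p\to\infty$ step of \cite[Proposition 4.2]{DL14} can be chained on. If \cite{DL14} only provides weak convergence of $Z^{p,n},U^{p,n}$ for fixed $p$, the stated strong mode in \eqref{conv1} would require an additional argument (e.g.\ a uniform-integrability / convexity argument upgrading weak to strong $L^r$ convergence for $r<2$ via the boundedness in $\H^2$), and that upgrade would be the real content to verify.
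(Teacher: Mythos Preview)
Your proposal is correct and matches the paper's own treatment: the paper does not give a proof of Theorem \ref{main_thm} at all, but simply states that it ``ensues from \cite[Theorem 4.1 and Proposition 4.2]{DL14}'', which is exactly the citation-and-assembly strategy you describe. Your additional commentary on how the two cited results compose (first $n\to\infty$ for fixed $p$, then $p\to\infty$) and on the role of Assumptions \ref{hypo1} and \ref{hypo2} is more detail than the paper provides, but the approach is identical.
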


\section{Convergence result}\label{sect:conv_res}

We prove in this Section that $\overline{\Theta}^{n}$ converges to $\Theta:=({Y}_t, {Z}_t, {U}_t, {A}_t-{K}_t)_{0\le
t \le T}$, the solution to the DRBSDE \eqref{eqintro}. The main result is
stated in the following Theorem.

\begin{theorem}\label{Schema2refl}
  Suppose that Assumption \ref{hypo2} holds and $g$ is a Lipschitz driver
  satisfying Assumption \ref{hypo1}. Then we have
  \begin{align*}
    \lim_{n \rightarrow \infty} \E\left[\int_0^T
      |\overline{Y}_t^n-Y_t|^2dt+\int_0^T |\overline{Z}_t^n-Z_t|^2dt+\int_0^T |\overline{U}_t^n-U_t|^2dt \right] =0.
  \end{align*}
  Moreover, $\overline{\alpha}_{\psi^n(t)}^n$ converges weakly to $\alpha_t$ in
  $L^2(\mathcal{F}_T)$.\\
\end{theorem}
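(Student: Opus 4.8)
The plan is to prove convergence of the explicit reflected scheme $\overline{\Theta}^n$ by a triangle-inequality argument that goes through the two intermediate schemes already set up in the paper: the implicit reflected scheme $\Theta^n$ and the implicit penalization scheme $\Theta^{p,n}$. Since Theorem~\ref{main_thm} already gives $\lim_{p\to\infty}\lim_{n\to\infty}\|\Theta^{p,n}-\Theta\|=0$ in the relevant norms, it suffices to control (a) the distance between the explicit reflected scheme and the implicit reflected scheme, and (b) the distance between the implicit reflected scheme and the penalization scheme (uniformly in $p$ after letting $n\to\infty$, or in a suitable iterated-limit sense). Concretely I would write, for the $Y$-component,
\begin{align*}
\E\!\int_0^T|\overline{Y}^n_t-Y_t|^2dt \;\le\; 3\,\E\!\int_0^T|\overline{Y}^n_t-Y^n_t|^2dt
+3\,\E\!\int_0^T|Y^n_t-Y^{p,n}_t|^2dt
+3\,\E\!\int_0^T|Y^{p,n}_t-Y_t|^2dt,
\end{align*}
and similarly for $Z$ and $U$ (using that $r<2$ for those where needed, as in Theorem~\ref{main_thm}), then take $\limsup_{n\to\infty}$ followed by $\limsup_{p\to\infty}$.

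\textbf{Key steps.} First I would establish standard a priori estimates for the discrete schemes: uniform (in $n$, and in $p$) bounds on $\E\max_j|y^n_j|^2$, $\sum_j\E|z^n_j|^2\delta$, $\sum_j\E|u^n_j|^2\delta$, and on the total variations $\E|A^n_T|^2$, $\E|K^n_T|^2$, using the representation $(\mathcal S_2)$, the Lipschitz property of $g$, Lemma~\ref{lem6}(i) for the barriers, and a discrete Gronwall argument; the same for the barred and the penalized quantities. Second, I would prove that the explicit and implicit reflected schemes are close: subtracting $(\overline{\mathcal S}_2)$ from $(\mathcal S_2)$, the only discrepancy is that $g$ is evaluated at $\E[\overline y^n_{j+1}|\mathcal F^n_j]$ instead of at $y^n_j$; since $(\mathcal S_2)$ gives $y^n_j = \Psi^{-1}(\E[y^n_{j+1}|\mathcal F^n_j]+a^n_j-k^n_j)$ and $a^n_j,k^n_j$ are themselves $O(\delta)$-Lipschitz perturbations, one gets $|y^n_j-\E[y^n_{j+1}|\mathcal F^n_j]|=O(\delta)$ locally, so the error in the driver is $O(\delta)$ per step; combined with a discrete Gronwall/stability estimate for reflected difference equations (using that the reflection terms $\overline a^n_j-a^n_j$, $\overline k^n_j-k^n_j$ contribute a nonpositive cross term against $\overline y^n_j-y^n_j$ by the Skorokhod conditions), this yields $\E\int_0^T|\overline Y^n_t-Y^n_t|^2dt + \E\int_0^T|\overline Z^n_t-Z^n_t|^2dt + \dots \to 0$ as $n\to\infty$, uniformly. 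Third — the substantive step — I would compare the implicit reflected scheme $(\mathcal S_1)$ with the penalized scheme \eqref{discrete}: the penalized increments $a^{p,n}_j=p\delta(y^{p,n}_j-\xi^n_j)^-$, $k^{p,n}_j=p\delta(\zeta^n_j-y^{p,n}_j)^-$ should converge, as $p\to\infty$ (after $n\to\infty$), to the genuine reflection increments; here I would lean on the comparison theorem (Assumption~\ref{hypo1}) exactly as in \cite{DL14}, showing monotonicity of $Y^{p,n}$ in $p$ and identifying the limit via the Skorokhod characterization, so that $\limsup_p\limsup_n\E\int_0^T|Y^n_t-Y^{p,n}_t|^2dt=0$, and likewise for $Z,U$ in $L^r$. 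Finally, for the weak convergence of $\overline\alpha^n_{\psi^n(t)}=\overline A^n_{\psi^n(t)}-\overline K^n_{\psi^n(t)}$ to $\alpha_t$ in $L^2(\mathcal F_T)$, I would combine the corresponding weak convergence of $\alpha^{p,n}_{\psi^n(t)}$ from Theorem~\ref{main_thm} with the strong $L^2$ closeness of $\overline\alpha^n-\alpha^{p,n}$ obtained by rearranging the discrete equations ($\overline\alpha^n_j-\alpha^{p,n}_j$ is, up to a martingale increment and an $O(\delta)$ driver term, controlled by $\overline Y^n-Y^{p,n}$), passing to the limit in $n$ then in $p$.

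\textbf{Main obstacle.} The delicate point is step three: transferring the convergence-in-$p$ of the penalized scheme to the reflected scheme in a way that is quantitative enough to survive the iterated limits, since Theorem~\ref{main_thm} only provides the doubly-iterated limit $\lim_p\lim_n$ rather than a joint estimate. One must be careful to order the limits as $n\to\infty$ first (at fixed $p$), then $p\to\infty$, and to ensure the explicit-vs-implicit reflected error vanishes uniformly in $p$ (which it does, being a pure time-discretization effect independent of the penalization). A secondary technical nuisance is handling the $Z$ and $U$ components, which converge only in $L^r$ for $r<2$ at the penalized level (Theorem~\ref{main_thm}) yet are claimed in $L^2$ in the statement of Theorem~\ref{Schema2refl}; closing this gap requires an additional uniform-integrability/strong-$L^2$ argument for the reflected scheme, presumably exploiting that the reflected scheme — unlike the penalized one — does not suffer from the exploding-penalization terms, so its $Z^n,U^n$ admit genuine $L^2$ bounds and a strong $L^2$ limit. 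I would isolate this as a separate lemma before assembling the final estimate.
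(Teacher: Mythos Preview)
Your three-term decomposition $\overline\Theta^n-\Theta^n$, $\Theta^n-\Theta^{p,n}$, $\Theta^{p,n}-\Theta$ is exactly the paper's route, and your treatment of the first term (explicit vs.\ implicit reflected) matches Proposition~\ref{prop6}: the Skorokhod conditions make the cross terms $(\overline y^n_j-y^n_j)(\overline a^n_j-a^n_j)$ and $-(\overline y^n_j-y^n_j)(\overline k^n_j-k^n_j)$ nonpositive, and the only remaining discrepancy is the driver evaluated at $\E[\overline y^n_{j+1}\mid\mathcal F^n_j]$ versus $y^n_j$, which is $O(\delta)$ per step.

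The genuine gap is your third step. You propose to show $\limsup_p\limsup_n\|Y^n-Y^{p,n}\|^2=0$ via monotonicity of $Y^{p,n}$ in $p$ and a Skorokhod identification of the limit. But this is circular in the stated order of limits: after sending $n\to\infty$ at fixed $p$ you do not yet know that $Y^n$ converges to anything (that is what you are trying to prove), so there is no object to compare the limit of $Y^{p,n}$ against. The paper avoids this entirely by proving a \emph{quantitative} estimate, Proposition~\ref{prop7}: for every $n\ge N_0$,
\[
\sup_t \E|Y^n_t-Y^{p,n}_t|^2+\E\!\int_0^T|Z^n-Z^{p,n}|^2+\E\!\int_0^T|U^n-U^{p,n}|^2\ \le\ \frac{c}{\sqrt p},
\]
with $c$ independent of $n$ and $p$. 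The key algebraic point you are missing is that when you expand $\E|y^n_j-y^{p,n}_j|^2$, the reflection cross term satisfies
\[
(y^n_i-y^{p,n}_i)(a^n_i-a^{p,n}_i)\ \le\ (y^{p,n}_i-\xi^n_i)^-\,a^n_i\ =\ \frac{a^{p,n}_i}{p\delta}\,a^n_i,
\]
because $(y^n_i-\xi^n_i)a^n_i=0$, $a^{p,n}_i\ge 0$, and $a^{p,n}_i=p\delta(y^{p,n}_i-\xi^n_i)^-$. Cauchy--Schwarz then turns the sum of these terms into
\(
\frac{2}{\sqrt p}\bigl(\tfrac{1}{p\delta}\sum_i\E|a^{p,n}_i|^2\bigr)^{1/2}\bigl(\tfrac{1}{\delta}\sum_i\E|a^n_i|^2\bigr)^{1/2},
\)
and both factors are bounded uniformly in $n,p$ by the a priori Lemmas~\ref{Reflscheme} and~\ref{estimationpenal}. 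This is what makes the iterated limit go through cleanly; no comparison/monotonicity argument is needed or used.

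Your observation about the $L^r$ versus $L^2$ mismatch for $Z$ and $U$ is sharp: Theorem~\ref{main_thm} indeed delivers only $L^r$, $r<2$, for $Z^{p,n}-Z$ and $U^{p,n}-U$, so the triangle inequality as written does not immediately close in $L^2$ for those components. The paper's proof glosses over this point; your instinct to isolate it as a separate uniform-integrability issue is correct.
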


\begin{proof}
To prove this result, we split the error in three terms. The first one is the error
$\overline{\Theta}^n-\Theta^n$, the second one is $\Theta^n- \Theta^{p,n}$, where
$\Theta^{p,n}:=({Y}^{p,n},{Z}^{p,n},{U}^{p,n},
{A}^{p,n}-{K}^{p,n})$ represents the solution given by the implicit
penalization scheme (see \eqref{eq32}), and the third error term is
$\Theta^{p,n}-\Theta$, whose convergence has already been proved in
\cite{DL14}. The result on the convergence of $\Theta^{p,n}$ to $\Theta$ is recalled in Theorem \ref{main_thm}.\\

We have the following inequality for the error on $Y$ (the same inequality
holds for the errors on $Z$ and $U$)
\begin{align*}
\E[\int_0^T|\overline{Y}_{t}^n-Y_{t}|^2 dt] \leq 3\E[\int_0^T
|\overline{Y}_{t}^n-Y_{t}^n|^2 dt]+3\E[\int_0^T
|Y_{t}^n-Y_{t}^{p,n}|^2 dt]+3[\int_0^T |Y_{t}^{p,n}-Y_{t}|^2 dt].
\end{align*}

  For the increasing processes, we have:
\begin{align}\label{Aconv}
\E[|\ov{\alpha}_{\psi^n(t)}^{n}-\alpha_{t}|^2] \leq
3\left(E[|\ov{\alpha}_{\psi^n(t)}^{n}-\alpha_{\psi^n(t)}^{n}|^2]+\E[|\alpha_{\psi^n(t)}^{n}-\alpha^{p,n}_t|^2]+\E[|\alpha_t^{p,n}-\alpha_t|^2]\right).
\end{align}

Then, combining Propositions \ref{prop6}, \ref{prop7} and Theorem
\ref{main_thm} yields the result.
\end{proof}

\begin{definition}[Definition of $c$ and $N_0$]\label{def1}
  In this Section and in the Appendix, $c$ denotes a generic constant depending on $C_g$,
  $\|g(\cdot,0,0,0)\|_{\infty}$ and $C_{\xi,\zeta,\lambda,T}$. $N_0$ is
  defined by $N_0:=
  4T(1+C_g+C_g^2+C_g^2\frac{e^{2 \lambda T}}{\lambda})$.
\end{definition}

  The rest of the
  Section is organized as follows: Section \ref{sect:imp_sch} recalls
  the implicit penalization scheme introduced in \cite{DL14}
  and the convergence of $\Theta^{p,n}-\Theta$, we give some intermediate
  results in Section \ref{sect:int_res} and we prove the convergence of 
  $\ov{\Theta}^n-\Theta^n$ (see Proposition \ref{prop6}) and the convergence
  of 
  ${\Theta}^n-\Theta^{p,n}$ (see Proposition \ref{prop7}) in Section \ref{sect:proof_prop}.

\subsection{Intermediate results}\label{sect:int_res}

In this Section we state two intermediate results useful for Section \ref{sect:proof_prop}.
\begin{lemma}\label{Reflscheme} Under Assumption \ref{hypo2} we have
\begin{align*}
 \sup_j \E[ |y_j^n|^2]+\E\left[ \delta \sum_{j=0}^{n-1}|z_j^n|^2
   +\kappa_n(1-\kappa_n) \sum_{j=0}^{n-1}|u_j^n|^2  + \frac{1}{\delta}
   \sum_{j=0}^{n-1} |a_j^n|^2+ \frac{1}{\delta} \sum_{j=0}^{n-1} |k_j^n |^2\right] \leq c.
\end{align*}
\end{lemma}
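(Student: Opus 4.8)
The plan is to establish the a priori estimate for the implicit reflected scheme $(\mathcal{S}_1)$ by the usual energy-type argument: multiply the recursion at step $j$ by $y_j^n$, expand the square coming from the martingale representation, sum over $j$, take expectations, and exploit the sign constraints in the scheme together with the Lipschitz property of $g$ and the bounds on the barriers from Lemma \ref{lem6}(i). Concretely, from line two of $(\mathcal{S}_1)$ write $y_j^n - \E[y_{j+1}^n|\mathcal{F}_j^n] = g(t_j,y_j^n,z_j^n,u_j^n)\delta + a_j^n - k_j^n$, and combine with the martingale increment identity $y_{j+1}^n - \E[y_{j+1}^n|\mathcal{F}_j^n] = z_j^n\sqrt{\delta}\,e_{j+1}^n + u_j^n\eta_{j+1}^n + v_j^n\mu_{j+1}^n$ from \eqref{eq27}. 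Taking squares of $y_{j+1}^n = y_j^n - g(t_j,\cdot)\delta - a_j^n + k_j^n + (\text{martingale increment})$ and using the conditional orthogonality of $e_{j+1}^n,\eta_{j+1}^n,\mu_{j+1}^n$ gives, after taking $\E[\cdot|\mathcal{F}_j^n]$,
\begin{align*}
\E[|y_{j+1}^n|^2|\mathcal{F}_j^n] = |y_j^n - g(t_j,\cdot)\delta - a_j^n + k_j^n|^2 + \delta|z_j^n|^2 + \kappa_n(1-\kappa_n)(|u_j^n|^2 + |v_j^n|^2).
\end{align*}
Expanding the first square and telescoping over $j=m,\dots,n-1$ yields a relation controlling $\E[|y_m^n|^2] + \E[\delta\sum|z_j^n|^2 + \kappa_n(1-\kappa_n)\sum(|u_j^n|^2+|v_j^n|^2)]$ plus the quadratic terms $\E[\sum(a_j^n-k_j^n)^2]$ by $\E[|y_n^n|^2] = \E[|\xi_n^n|^2]$ plus cross terms involving $g$ and the increments.

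The key structural point is handling the cross terms with the increasing processes. Using $a_j^n k_j^n = 0$ we have $(a_j^n - k_j^n)^2 = |a_j^n|^2 + |k_j^n|^2$, so these terms appear with a good sign on the left; the remaining cross term is $-2y_j^n(a_j^n-k_j^n) = -2(y_j^n - \xi_j^n)a_j^n + 2(y_j^n - \zeta_j^n)k_j^n - 2\xi_j^n a_j^n + 2\zeta_j^n k_j^n$, and the complementarity conditions $(y_j^n-\xi_j^n)a_j^n = (y_j^n-\zeta_j^n)k_j^n = 0$ kill the first two pieces, leaving only $-2\xi_j^n a_j^n + 2\zeta_j^n k_j^n$, which one bounds by Young's inequality as $\varepsilon(|a_j^n|^2 + |k_j^n|^2)/\delta + \frac{\delta}{\varepsilon}(|\xi_j^n|^2 + |\zeta_j^n|^2)$, absorbing the first part into the left-hand side for $\varepsilon$ small and controlling the second by Lemma \ref{lem6}(i) after summation (the $\delta\sum_j$ gives a factor $T$). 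The cross terms with $g$ are routine: $|g(t_j,y_j^n,z_j^n,u_j^n)|^2\delta^2 \le C\delta^2(1 + |y_j^n|^2 + |z_j^n|^2 + |u_j^n|^2)$ using the Lipschitz bound and $\|g(\cdot,0,0,0)\|_\infty < \infty$, and the term $2y_j^n g(t_j,\cdot)\delta$ is handled by Young, the $\delta|z_j^n|^2$ and $\delta|u_j^n|^2$ contributions being absorbable once $\delta$ is small since the genuine $z$- and $u$-energy terms carry weights $\delta$ and $\kappa_n(1-\kappa_n) \sim \lambda\delta$ respectively.

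After these manipulations one arrives at an inequality of the form
\begin{align*}
\E[|y_m^n|^2] + \tfrac{1}{2}\E\Big[\delta\sum_{j=m}^{n-1}|z_j^n|^2 + \kappa_n(1-\kappa_n)\sum_{j=m}^{n-1}|u_j^n|^2 + \tfrac{1}{\delta}\sum_{j=m}^{n-1}(|a_j^n|^2+|k_j^n|^2)\Big] \le c + C\delta\sum_{j=m}^{n-1}\E[|y_j^n|^2],
\end{align*}
where $c$ absorbs $\E[|\xi_n^n|^2]$ and the barrier sums via Lemma \ref{lem6}(i). A discrete Gronwall argument (backward in $m$) then gives $\sup_m \E[|y_m^n|^2] \le c$ uniformly in $n$, and feeding this back into the displayed inequality with $m=0$ yields the bound on all the energy terms, with the $v_j^n$-term simply discarded (it has the right sign). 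The main obstacle is the bookkeeping needed to make sure all the "bad" terms $\delta|z_j^n|^2$, $\delta|u_j^n|^2$, $|a_j^n|^2$, $|k_j^n|^2$ produced by the various Young inequalities are absorbed with strictly positive leftover weights — this forces a condition "$\delta$ small enough" (equivalently $n \ge N_0$), which is why $N_0$ was defined; for the finitely many remaining $n < N_0$ the bound is trivial since each such scheme is a fixed finite-dimensional system with the explicit solution formula from $(\mathcal{S}_2)$. One should also note at the outset that the conditional expectations $z_j^n, u_j^n, v_j^n$ are well-defined $\mathcal{F}_j^n$-measurable random variables via \eqref{eq27} and that $(\mathcal{S}_2)$ guarantees $y_j^n$ exists and is $\mathcal{F}_j^n$-measurable, so the computation above is legitimate.
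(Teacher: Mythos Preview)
Your energy identity is set up correctly, but the argument breaks at the absorption step for the reflection terms. Expanding the square puts $\sum_j (g(t_j,\cdot)\delta + a_j^n - k_j^n)^2$ on the left, which after using $a_j^n k_j^n=0$ yields at most $\sum_j(|a_j^n|^2+|k_j^n|^2)$ with coefficient of order one --- there is no $1/\delta$ in front. On the right, after complementarity you are left with $2\xi_j^n a_j^n - 2\zeta_j^n k_j^n$, and your Young split gives $\tfrac{\varepsilon}{\delta}\sum_j(|a_j^n|^2+|k_j^n|^2)+\tfrac{\delta}{\varepsilon}\sum_j(|\xi_j^n|^2+|\zeta_j^n|^2)$. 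To absorb the first piece into the left you need $\varepsilon/\delta$ bounded, i.e.\ $\varepsilon\lesssim\delta$; but then the barrier piece is of order $\tfrac{1}{\varepsilon}\cdot T\cdot c\gtrsim c/\delta\to\infty$. No choice of $\varepsilon$ closes the estimate, so the displayed inequality with $\tfrac{1}{\delta}\sum(|a_j^n|^2+|k_j^n|^2)$ on the left is never obtained. (In particular, the lemma's $\tfrac{1}{\delta}$ weight on the reflection terms does \emph{not} come from the energy identity.)

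The ingredient you are missing is precisely the It\^o structure of the barriers in Assumption~\ref{hypo2}. From $(\mathcal{S}_2)$, the monotonicity of $\Psi$ and $y^n_{i+1}\ge \xi^n_{i+1}$ one gets the pointwise bound
\[
a^n_i \;\le\; \big(\E[\xi^n_{i+1}\,|\,\mathcal{F}^n_i]+\delta\,g(t_i,\xi^n_i,z^n_i,u^n_i)-\xi^n_i\big)^- \;=\; \delta\,\big(b^{\xi}_{t_i}+g(t_i,\xi^n_i,z^n_i,u^n_i)\big)^-,
\]
and the analogous bound for $k^n_i$ with $b^{\zeta}$ and $\zeta^n_i$ (this is \eqref{estim_ak} in the paper). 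This is what makes $\tfrac{1}{\delta}\sum_i|a^n_i|^2$ controllable by $\delta\sum_i(|b^{\xi}_{t_i}|^2+|g(t_i,0,0,0)|^2+C_g^2(|\xi^n_i|^2+|z^n_i|^2+|u^n_i|^2))$; the $z$- and $u$-contributions coming back are then absorbed by choosing the Young parameter small. Without the drift representation of the barrier increments the reflection increments need not be $O(\delta)$ and your route cannot recover the stated $1/\delta$ bound. As a side remark, the bound $\sup_j\E[|y^n_j|^2]\le c$ is immediate from $\xi^n_j\le y^n_j\le \zeta^n_j$ and Lemma~\ref{lem6}(i); no Gronwall is needed for that part.
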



\begin{proof}
  
  Since $\xi^n_j \le y^n_j \le \zeta^n_j$, Assumption \ref{hypo2} gives
  $\sup_j \E(|y^n_j|^2) \le c$. Let us deal with $z^n_j$ and $u^n_j$.
To do this, we apply Lemma \ref{lem12} with $i_0=i$ and $i_1={i+1}$ to the process $y^n$
and we sum the equality from $i=j$
to $i=n$. We get:
\begin{align*}
\E[|y_j^n|^2]+ &\delta \sum_{i=j}^{n-1}\E[|z_i^n|^2]  +\kappa_n(1-\kappa_n) \sum_{i=j}^{n-1}\E[|u_i^n|^2]\\
\leq& \E[|\xi^n_n|^2]+2\delta \sum_{i=j}^{n-1}\E[y_i^n g(t_i,y_i^n,z_i^n,u_i^n)]+2
\sum_{i=j}^{n-1}\E[y_i^na_i^n]-2 \sum_{i=j}^{n-1}\E[y_i^nk_i^n], \\
\leq& \E[|\xi^n_n|^2]+\delta \sum_{i=j}^{n-1}g(t_i,0,0,0)^2
+\delta\left(1+2C_g+2C_g^2+\frac{2C_g^2 \delta}{\kappa_n(1-\kappa_n)}\right)
\sum_{i=j}^{n-1}\E[|y_i^n|^2] \\
&+\frac{\delta}{2}\sum_{i=j}^{n-1}\E[|z_i^n|^2]
+\frac{\kappa_n(1-\kappa_n)}{2}\sum_{i=j}^{n-1}\E[|u_i^n|^2]+ \frac{2\delta}{\alpha} \sum_{i=j}^{n-1} \E(|y^n_i|^2) + \frac{\alpha}{\delta} \sum_{i=j}^{n-1} \E(|a^n_i|^2)+ \frac{\alpha}{\delta} \sum_{i=j}^{n-1} \E(|k^n_i|^2).\\
\end{align*}

Since $\xi^n_i \le y^n_i \le \zeta^n_i$, we get
\begin{align}\label{estim_ak}
 & a^n_i \le \left( \E(\xi^n_{i+1}|\mG^n_i) +\delta
    g(t_i,\xi^n_i,z^n_i,u^n_i)-\xi^n_i\right)^-=
    \delta(b^{\xi}_{t_i}+g(t_i,\xi^n_i,z^n_i,u^n_i))^-,\\
    &k^n_i \le \left( \E(\zeta^n_{i+1}|\mG^n_i) +\delta
    g(t_i,\zeta^n_i,z^n_i,u^n_i)-\zeta^n_i\right)^+=
    \delta(b^{\zeta}_{t_i}+g(t_i,\zeta^n_i,z^n_i,u^n_i))^+.\notag
  \end{align}
  Then, using the Lipschitz property of $g$ gives
  \begin{align}\label{eq37}
    \frac{\alpha}{\delta} \sum_{i=j}^{n-1} \E(|a^n_i|^2)\le 5 \alpha \delta
    \sum_{i=j}^{n-1}\E[|b^{\xi}_i|^2+|g(t_i,0,0,0)|^2 + C_g^2( |\xi^n_i|^2 +
    |z^n_i|^2 + |u^n_i|^2)],
  \end{align}
  and the same result holds for $\frac{\alpha}{\delta} \sum_{i=j}^{n-1}
  \E(|k^n_i|^2)$.
  By Using Assumption \ref{hypo2} and the inequality $\sup_i \E(|y^n_i|^2) \le
  c$, we get
  \begin{align*} \delta \sum_{i=j}^{n-1}\E[|z_i^n|^2]  +\kappa_n(1-\kappa_n)
    \sum_{i=j}^{n-1}\E[|u_i^n|^2]\le c& + \delta \left( \frac{1}{2}+ 10 \alpha
      C_g^2\right)\sum_{i=j}^{n-1} \E(|z^n_i|^2)\\
    &+ \kappa_n(1-\kappa_n)\left( \frac{1}{2}+ 10 \alpha C_g^2
      \frac{\delta}{\kappa_n(1-\kappa_n)}\right)\sum_{i=j}^{n-1} \E(|u^n_i|^2).
  \end{align*}
  Since $\frac{\delta}{(1-\kappa_n)\kappa_n}=\frac{1}{\lambda}
  \frac{\lambda \delta}{(1-e^{-\lambda \delta})e^{-\lambda \delta}}$ and $e^x
  \le \frac{x e^{2x}}{e^x-1}\le e^{2x}$, we get
  $\frac{\delta}{(1-\kappa_n)\kappa_n}\le \frac{1}{\lambda}e^{2\lambda
    T}$. Then, by
  taking $\alpha=\frac{1}{40 C_g^2} (\lambda e^{-2\lambda T}\wedge 1)$, we get $\delta \sum_{i=j}^{n-1}\E[|z_i^n|^2]  +\kappa_n(1-\kappa_n)
    \sum_{i=j}^{n-1}\E[|u_i^n|^2]\le c$. Plugging this result in \eqref{eq37}
    ends the proof.

\end{proof}
The same type of proof gives the following Lemma
\begin{lemma}\label{lem8}
Under Assumption \ref{hypo2}, we have
\begin{align*}
\sup_j\E[ |\overline{y}_j^n|^2]+\E \left[\delta
  \sum_{j=0}^{n-1}|\overline{z}_j^n|^2 +\kappa_n(1-\kappa_n)
  \sum_{j=0}^{n-1}|\overline{u}_j^n|^2  + \frac{1}{\delta} \sum_{j=0}^{n-1}
  |\overline{a}_j^n |^2+ \frac{1}{\delta} \sum_{j=0}^{n-1}|\overline{k}_j^n |^2\right] \leq c.
\end{align*}
\end{lemma}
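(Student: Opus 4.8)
The plan is to mimic the proof of Lemma~\ref{Reflscheme} almost verbatim, replacing the implicit reflected scheme $(y^n,z^n,u^n,a^n,k^n)$ by the explicit one $(\ov y^n,\ov z^n,\ov u^n,\ov a^n,\ov k^n)$ and tracking the only structural difference: in the explicit scheme the driver is evaluated at $\E[\ov y_{j+1}^n\mid\mF_j^n]$ rather than at the current value $\ov y_j^n$. Since $\xi^n_j\le\ov y^n_j\le\zeta^n_j$ by construction of the scheme, Lemma~\ref{lem6}(i) and Assumption~\ref{hypo2} immediately give $\sup_j\E[|\ov y^n_j|^2]\le c$, so the whole work is again the control of $\ov z^n$, $\ov u^n$, $\ov a^n$, $\ov k^n$.

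First I would apply Lemma~\ref{lem12} (the discrete It\^o-type identity used in Lemma~\ref{Reflscheme}) with $i_0=i$, $i_1=i+1$ to the process $\ov y^n$ and sum from $i=j$ to $i=n-1$, obtaining
\begin{align*}
\E[|\ov y_j^n|^2]+\delta\sum_{i=j}^{n-1}\E[|\ov z_i^n|^2]+\kappa_n(1-\kappa_n)\sum_{i=j}^{n-1}\E[|\ov u_i^n|^2]
\le \E[|\xi^n_n|^2]+2\delta\sum_{i=j}^{n-1}\E[\ov y_i^n\,g(t_i,\E[\ov y_{i+1}^n\mid\mF_i^n],\ov z_i^n,\ov u_i^n)]+2\sum_{i=j}^{n-1}\E[\ov y_i^n\ov a_i^n]-2\sum_{i=j}^{n-1}\E[\ov y_i^n\ov k_i^n].
\end{align*}
The cross term with the driver is handled by Young's inequality and the Lipschitz property of $g$; the only new point is that $|g(t_i,\E[\ov y_{i+1}^n\mid\mF_i^n],\ov z_i^n,\ov u_i^n)|\le |g(t_i,0,0,0)|+C_g(|\E[\ov y_{i+1}^n\mid\mF_i^n]|+|\ov z_i^n|+|\ov u_i^n|)$ and $\E[|\E[\ov y_{i+1}^n\mid\mF_i^n]|^2]\le\E[|\ov y_{i+1}^n|^2]\le c$ by Jensen and the bound on $\sup_j\E[|\ov y_j^n|^2]$, so this term contributes only $\delta\,(\text{const})\sum_i\E[|\ov z_i^n|^2]+\delta\,(\text{const})\sum_i\E[|\ov u_i^n|^2]+c$, exactly as before. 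For the $\ov a^n$ and $\ov k^n$ terms one uses $\xi^n_i\le\ov y^n_i\le\zeta^n_i$ together with the representation from $(\overline{\mathcal S}_2)$, namely $\ov a_i^n=(\E[\ov y_{i+1}^n\mid\mF_i^n]+\delta g(t_i,\E[\ov y_{i+1}^n\mid\mF_i^n],\ov z_i^n,\ov u_i^n)-\xi^n_i)^-$ and similarly for $\ov k_i^n$, which (using $\E[\xi^n_{i+1}\mid\mF_i^n]=\xi^n_i+\delta b^\xi_{t_i}$) yields $\ov a_i^n\le\delta\,|b^\xi_{t_i}+g(t_i,\E[\ov y_{i+1}^n\mid\mF_i^n],\ov z_i^n,\ov u_i^n)|$; squaring, using the Lipschitz bound on $g$ and $\E[|\E[\ov y_{i+1}^n\mid\mF_i^n]|^2]\le c$ gives the analogue of \eqref{eq37}, namely $\frac{\alpha}{\delta}\sum_i\E[|\ov a_i^n|^2]\le c\,\alpha+ c\,\alpha\,\delta\sum_i\E[|\ov z_i^n|^2]+c\,\alpha\,\delta\sum_i\E[|\ov u_i^n|^2]$, and likewise for $\ov k^n$.

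Collecting these estimates, the $\ov z^n$ and $\ov u^n$ sums appear on the right-hand side multiplied by coefficients of the form $\tfrac12+c\,\alpha$ and $\tfrac12+c\,\alpha\,\tfrac{\delta}{\kappa_n(1-\kappa_n)}$. Using, exactly as in Lemma~\ref{Reflscheme}, the bound $\frac{\delta}{\kappa_n(1-\kappa_n)}\le\frac1\lambda e^{2\lambda T}$ (which follows from $e^x\le\frac{xe^{2x}}{e^x-1}\le e^{2x}$), one fixes $\alpha$ small enough (depending only on $C_g$, $\lambda$, $T$) so that both coefficients are strictly less than $1$, and absorbs the $\ov z^n$, $\ov u^n$ terms into the left-hand side; this gives $\delta\sum_{i}\E[|\ov z_i^n|^2]+\kappa_n(1-\kappa_n)\sum_i\E[|\ov u_i^n|^2]\le c$. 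Plugging this back into the bounds for $\frac1\delta\sum_i\E[|\ov a_i^n|^2]$ and $\frac1\delta\sum_i\E[|\ov k_i^n|^2]$ closes the argument (taking $j=0$ for the final statement).

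I do not expect a genuine obstacle here: the explicit scheme differs from the implicit one only by the argument of $g$, and since the driver is Lipschitz in $y$ and $\E[|\E[\ov y_{i+1}^n\mid\mF_i^n]|^2]\le\E[|\ov y_{i+1}^n|^2]$ is already under control, this substitution is harmless in every estimate. The one point requiring a line of care is the representation of $\ov a_i^n$, $\ov k_i^n$ and the identity $\E[\xi^n_{i+1}\mid\mF_i^n]=\xi^n_i+\delta b^\xi_{t_i}$ (and its $\zeta$-counterpart), which hold by \eqref{xixi} and the fact that $e^n$, $\eta^n$ are centred; everything else is Young's inequality, the Lipschitz bound, Lemma~\ref{lem6}(i) and the same choice of $\alpha$ as in Lemma~\ref{Reflscheme}.
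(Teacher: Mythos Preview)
Your proposal is correct and follows exactly the approach the paper intends: the paper itself simply writes ``The same type of proof gives the following Lemma'' after Lemma~\ref{Reflscheme}, and your adaptation---replacing the implicit argument of $g$ by $\E[\ov y_{i+1}^n\mid\mF_i^n]$ and controlling the latter via Jensen and $\sup_j\E[|\ov y_j^n|^2]\le c$---is precisely the harmless substitution the authors have in mind. The key ingredients (the $(\overline{\mathcal S}_2)$ representation of $\ov a_i^n$, $\ov k_i^n$, the identity $\E[\xi^n_{i+1}\mid\mF_i^n]=\xi^n_i+\delta b^\xi_{t_i}$, the bound $\delta/(\kappa_n(1-\kappa_n))\le \lambda^{-1}e^{2\lambda T}$, and the choice of $\alpha$) are all reused verbatim from the implicit case.
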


\subsection{Proof of the convergence of $\ov{\Theta}^n-\Theta^n$ and
  $\Theta^n-\Theta^{p,n}$}\label{sect:proof_prop}

\begin{proposition}\label{prop6}
Assume that Assumption \ref{hypo2} holds and $g$ is a Lipschitz driver. We have
\begin{align}\label{prop6_1}
\lim_{n \rightarrow \infty} \sup_{0 \leq t \leq T}\E[
|\overline{Y}_t^{n}-Y_t^{n}|^2]+\E[\int_0^T|\overline{Z}_s^{n}-Z_s^{n}|^2ds]+\E[\int_0^T|\overline{U}_s^{n}-U_s^{n}|^2 ds] = 0.
\end{align}
Moreover,
$\lim_{n\rightarrow \infty} (\overline{\alpha}^{n}_t-\alpha^{n}_t)=0$ in
$L^2(\mF_t)$, for $t\in [0,T]$.
\end{proposition}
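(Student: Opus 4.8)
The plan is to prove Proposition \ref{prop6} by a standard stability/contraction argument comparing the explicit reflected scheme $\ov\Theta^n$ with the implicit reflected scheme $\Theta^n$. Write $\hat y^n_j:=\ov y^n_j-y^n_j$, and similarly $\hat z,\hat u,\hat v,\hat a,\hat k$. Subtracting $(\overline{\mathcal{S}}_1)$ from $(\mathcal{S}_1)$, or rather subtracting the two martingale-representation forms \eqref{scheme2_exp} and \eqref{scheme2}, gives
\begin{align*}
\hat y^n_j=\hat y^n_{j+1}+\bigl(g(t_j,\E[\ov y^n_{j+1}|\mF^n_j],\ov z^n_j,\ov u^n_j)-g(t_j,y^n_j,z^n_j,u^n_j)\bigr)\delta+\hat a^n_j-\hat k^n_j-\hat z^n_j\sqrt\delta e^n_{j+1}-\hat u^n_j\eta^n_{j+1}-\hat v^n_j\mu^n_{j+1}.
\end{align*}
The only structural difference from the implicit case is that $g$ is evaluated at $\E[\ov y^n_{j+1}|\mF^n_j]$ rather than at $\ov y^n_j$; the Lipschitz bound on $g$ converts this into the extra term $C_g|\E[\ov y^n_{j+1}|\mF^n_j]-\ov y^n_j|=C_g|\E[\ov y^n_{j+1}-\ov y^n_j|\mF^n_j]|$, which by $(\overline{\mathcal{S}}_1)$ equals $C_g|g(t_j,\cdot,\cdot,\cdot)\delta+\ov a^n_j-\ov k^n_j|$, of order $\delta+\ov a^n_j+\ov k^n_j$. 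This is the term that will ultimately vanish as $n\to\infty$, using Lemma \ref{lem8} (which bounds $\frac1\delta\sum(\ov a^n_j)^2$ and $\frac1\delta\sum(\ov k^n_j)^2$ by $c$, hence $\sum\ov a^n_j+\sum\ov k^n_j=O(\sqrt\delta)$ by Cauchy--Schwarz).

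The key step is to apply the discrete It\^o-type identity (Lemma \ref{lem12}, used in the proof of Lemma \ref{Reflscheme}) to $(\hat y^n_i)^2$ on $[i,i+1]$ and sum from $i=j$ to $n-1$, using $\hat y^n_n=0$ (both schemes have terminal value $\xi^n_n$). This produces
\begin{align*}
\E[|\hat y^n_j|^2]+\delta\sum_{i=j}^{n-1}\E[|\hat z^n_i|^2]+\kappa_n(1-\kappa_n)\sum_{i=j}^{n-1}\E[|\hat u^n_i|^2]+(\text{term in }\hat v)=2\sum_{i=j}^{n-1}\E[\hat y^n_i\,\D g_i]\delta+2\sum_{i=j}^{n-1}\E[\hat y^n_i(\hat a^n_i-\hat k^n_i)],
\end{align*}
where $\D g_i$ is the difference of drivers above. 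The cross term $\E[\hat y^n_i(\hat a^n_i-\hat k^n_i)]$ is handled exactly as in the Skorokhod-type estimates already appearing in Lemma \ref{Reflscheme}: using $\xi^n_i\le y^n_i\le\zeta^n_i$, $\xi^n_i\le\ov y^n_i\le\zeta^n_i$ together with the flat-off conditions $(y^n_i-\xi^n_i)a^n_i=0$ etc., one gets $\hat y^n_i\hat a^n_i=(\ov y^n_i-y^n_i)(\ov a^n_i-a^n_i)\le 0$-type inequalities, more precisely $\ov a^n_i(y^n_i-\xi^n_i)\ge0$ and $a^n_i(\ov y^n_i-\xi^n_i)\ge0$ give $\hat y^n_i\hat a^n_i\le0$, and symmetrically $\hat y^n_i\hat k^n_i\ge0$, so $\E[\hat y^n_i(\hat a^n_i-\hat k^n_i)]\le0$. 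The driver term is split with Young's inequality: the $\hat z,\hat u$ parts are absorbed into the left side (choosing the Young constant as in Lemma \ref{Reflscheme}, using $\delta/(\kappa_n(1-\kappa_n))\le\lambda^{-1}e^{2\lambda T}$), the $|\hat y^n_i|^2$ part feeds a discrete Gronwall, and the remaining contribution is $c\sum_{i}\E[\hat y^n_i](\delta+\ov a^n_i+\ov k^n_i)$, which is $\le\eta\sum_i\E[|\hat y^n_i|^2]\delta+\frac{c}{\eta}\bigl(\delta^2 n+\frac1\delta(\sum(\ov a^n_i)^2+\sum(\ov k^n_i)^2)\bigr)\cdot$(small), i.e. $O(\sqrt\delta)$ after using Lemma \ref{lem8}.

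After absorbing and applying the discrete Gronwall lemma over $i=j,\dots,n-1$ (the Gronwall constant is controlled uniformly in $n$ since $\delta\sum 1\le T$), one obtains $\sup_j\E[|\hat y^n_j|^2]+\delta\sum_j\E[|\hat z^n_j|^2]+\kappa_n(1-\kappa_n)\sum_j\E[|\hat u^n_j|^2]\le c\,\varepsilon_n$ with $\varepsilon_n\to0$; translating back via the continuous-time versions \eqref{eq33}, \eqref{eq34} and noting $\E[\int_0^T|\ov Z^n_s-Z^n_s|^2ds]=\delta\sum_j\E[|\hat z^n_j|^2]$ and $\E[\int_0^T|\ov U^n_s-U^n_s|^2ds]=\delta\sum_j\E[|\hat u^n_j|^2]\le\frac{\delta}{\kappa_n(1-\kappa_n)}\cdot\kappa_n(1-\kappa_n)\sum_j\E[|\hat u^n_j|^2]\le\lambda^{-1}e^{2\lambda T}\cdot c\varepsilon_n$ gives \eqref{prop6_1}. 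Finally, for the increasing processes, $\ov\alpha^n_t-\alpha^n_t=\sum_{i\le[t/\delta]}(\hat a^n_i-\hat k^n_i)$, and from the scheme $\hat a^n_i-\hat k^n_i=\hat y^n_i-\hat y^n_{i+1}-\D g_i\,\delta+\hat z^n_i\sqrt\delta e^n_{i+1}+\hat u^n_i\eta^n_{i+1}+\hat v^n_i\mu^n_{i+1}$, so $\E[|\ov\alpha^n_t-\alpha^n_t|^2]$ is bounded by a constant times $\E[|\hat y^n_0|^2]+\E[|\hat y^n_{[t/\delta]+1}|^2]+\delta^2n\sum\E[|\D g_i|^2]+\delta\sum\E[|\hat z^n_i|^2]+\kappa_n(1-\kappa_n)\sum\E[|\hat u^n_i|^2]+\dots\to0$, using orthogonality of the martingale increments. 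The main obstacle is the bookkeeping in the driver term: one must be careful that the extra ``explicit vs.\ implicit'' discrepancy $C_g|\E[\ov y^n_{j+1}-\ov y^n_j|\mF^n_j]|$ genuinely contributes only an $O(\sqrt\delta)$ right-hand side after invoking the a priori bound of Lemma \ref{lem8}, rather than something that needs to be absorbed on the left.
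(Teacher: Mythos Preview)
Your approach is essentially the paper's: apply the discrete square identity (Lemma \ref{lem12}) to $\hat y^n$, kill the reflection cross-terms via the Skorokhod conditions $(\hat y^n_i\hat a^n_i\le0$, $\hat y^n_i\hat k^n_i\ge0)$, absorb the $\hat z,\hat u$ pieces of the driver difference into the left side, rewrite the implicit/explicit discrepancy $y^n_j-\E[\ov y^n_{j+1}\mid\mF^n_j]$ as $(y^n_j-\ov y^n_j)+\delta g(\cdot)+\ov a^n_j-\ov k^n_j$, control the residual via Lemma \ref{lem8}, and finish with discrete Gronwall. The paper carries this out and obtains the rate $\sup_j\E|\hat y^n_j|^2\le c\delta^2$.

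Two points of bookkeeping in your write-up are off, though. First, the parenthetical claim ``$\sum_j\ov a^n_j+\sum_j\ov k^n_j=O(\sqrt\delta)$ by Cauchy--Schwarz'' is false: Cauchy--Schwarz and Lemma \ref{lem8} only give $\E\sum_j\ov a^n_j\le\sqrt n\,\sqrt{\E\sum_j(\ov a^n_j)^2}\le\sqrt n\,\sqrt{c\delta}=\sqrt{cT}$, which is $O(1)$. Fortunately this is never actually used. Second, in your Young split you have dropped a factor of $\delta$: the cross-term you must estimate is
\[
2\delta C_g\sum_i\E\bigl[\,|\hat y^n_i|\cdot|\delta g(\cdot)+\ov a^n_i-\ov k^n_i|\,\bigr],
\]
and $2\delta ab\le\delta a^2+\delta b^2$ gives a Gronwall term $\delta\sum_i\E|\hat y^n_i|^2$ plus $C_g^2\,\delta\sum_i\E\bigl[(\delta g+\ov a^n_i-\ov k^n_i)^2\bigr]$. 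By Lemma \ref{lem8} the latter is at most $c\delta^2$, since $\delta\sum_i\E[(\ov a^n_i)^2]\le\delta\cdot c\delta$ and $\delta^3\sum_i\E[g^2]\le c\delta^2$. Your displayed bound ``$\frac{c}{\eta}\bigl(\delta^2 n+\frac1\delta\sum(\ov a^n_i)^2\bigr)\cdot(\text{small})$'' does not close as written, because $\frac1\delta\sum\E[(\ov a^n_i)^2]$ is only $O(1)$ and the ``(small)'' factor is unexplained. With the missing $\delta$ restored you recover exactly the paper's $c\delta^2$, and the rest of your argument (including the $\ov\alpha^n-\alpha^n$ part) goes through.
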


\begin{proof}
Let us consider $y_j^n$, the solution of the discrete implicit reflected sheme \eqref{scheme2} and $\overline{y}_j^n$, the solution of the explicit reflected scheme \eqref{scheme2_exp}.
We compute $|y_j^n-\overline{y}_j^n|^2$, we take the expectation and we get:
\begin{align*}
\E[|y_j^n-\overline{y}_j^n|^2] \leq& \E[|y_{j+1}^n-\overline{y}_{j+1}^n|^2]-\delta \E[|z_j^n-\overline{z}_j^n|^2]-\kappa_n(1-\kappa_n) \E[|u_j^n-\overline{u}_j^n|^2]\\
&+2\delta \E[(y_j^n-\overline{y}_j^n)(g(t_j,y_j^n,z_j^n,u_j^n)-g(t_j,\E[\overline{y}_{j+1}^n|\mathcal{F}_j^n],\overline{z}_j^n,\overline{u}_j^n))]\\
&-\E\left[\delta(g(t_j,y_j^n,z_j^n,u_j^n)-g(t_j,\E[\overline{y}_{j+1}^n|\mathcal{F}_j^n],\overline{z}_j^n,\overline{u}_j^n))+(a_j^n-\overline{a}_j^n)-(k_j^n-\overline{k}_j^n)\right]^2\\
&+2\E[(y_j^n-\overline{y}_j^n)(a_j^n-\overline{a}_j^n)]-2\E[(y_j^n-\overline{y}_j^n)(k_j^n-\overline{k}_j^n)],\\
&\leq \E[|y_{j+1}^n-\overline{y}_{j+1}^n|^2]-\delta
\E[|z_j^n-\overline{z}_j^n|^2]-\kappa_n(1-\kappa_n)
\E[|u_j^n-\overline{u}_j^n|^2]\\
&+2\delta \E[(y_j^n-\overline{y}_j^n)(g(t_j,y_j^n,z_j^n,u_j^n)-g(t_j,\E[\overline{y}_{j+1}^n|\mathcal{F}_j^n],\overline{z}_j^n,\overline{u}_j^n))].
\end{align*}

The last inequality comes from $ (y_j^n-\overline{y}_j^n)(a_j^n-\overline{a}_j^n) \leq 0$ and $
(y_j^n-\overline{y}_j^n)(k_j^n-\overline{k}_j^n) \geq 0$ (this ensues from the
third and fourth lines of 
$(\mathcal{S}_1)$ and $(\ov{\mathcal{S}}_1)$). Taking the sum from
$j=i$ to $n-1$ we get
\begin{align}
  \E[|y_i^n-\overline{y}_i^n|^2]+&\delta \sum_{j=i}^{n-1}
  \E[|z_j^n-\overline{z}_j^n|^2]+ \kappa_n(1-\kappa_n) \sum_{j=i}^{n-1}
  \E[|u_j^n-\overline{u}_j^n|^2]\notag\\ &\le 2\delta \sum_{j=i}^{n-1}
  \E[(y_j^n-\overline{y}_j^n)(g(t_j,y_j^n,z_j^n,u_j^n)-g(t_j,\E[\overline{y}_{j+1}^n|\mathcal{F}_j^n],\overline{z}_j^n,\overline{u}_j^n))],\notag
  \\
  &\le 2\delta C_g \sum_{j=i}^{n-1} \E\left[|y_j^n-\overline{y}_j^n|
  |y_j^n-\E[\overline{y}_{j+1}^n|\mathcal{F}_j^n]|\right]+2\delta
  C_g^2\left(1+\frac{\delta}{\kappa_n(1-\kappa_n)}\right) \sum_{j=i}^{n-1}
  \E[|y_j^n-\overline{y}_j^n|^2]\notag \\
  &+\frac{\delta}{2}\sum_{j=i}^{n-1}\E[|z_j^n-\overline{z}_j^n|^2]+\frac{\kappa_n(1-\kappa_n)}{2}\sum_{j=i}^{n-1}\E[|u_j^n-\overline{u}_j^n|^2]\label{eq35}.
\end{align}
Since $y^n_j-\E[\overline{y}_{j+1}^n|\mathcal{F}_j^n]=y^n_j-\ov{y}^n_j+\ov{y}_j^n-\E[\overline{y}_{j+1}^n|\mathcal{F}_j^n]=y^n_j-\ov{y}^n_j+\delta
g(t_j,\E[\overline{y}_{j+1}^n|\mathcal{F}_j^n],\overline{z}_j^n,\overline{u}_j^n)+\ov{a}^n_j-\ov{k}^n_j$,
we get
\begin{align*}
 2 \delta C_g  \E\left[|y_j^n-\overline{y}_j^n|
    |y_j^n-\E[\overline{y}_{j+1}^n|\mathcal{F}_j^n]|\right] \le (2
  C_g+1)\delta \E[|y_j^n-\overline{y}_j^n|^2]+C_g^2 \delta \E\left[\left(|\delta g(t_j,\E[\overline{y}_{j+1}^n|\mathcal{F}_j^n],\overline{z}_j^n,\overline{u}_j^n)|+|\ov{a}^n_j|+|\ov{k}^n_j|\right)^2\right].
\end{align*} 
Plugging the previous inequality in \eqref{eq35} and using Lemma \ref{lem8}
gives
\begin{align*} \E[|y_i^n-\overline{y}_i^n|^2]+\frac{\delta}{2}
  \sum_{j=i}^{n-1} \E[|z_j^n-\overline{z}_j^n|^2]+&
  \frac{\kappa_n(1-\kappa_n)}{2} \sum_{j=i}^{n-1}
  \E[|u_j^n-\overline{u}_j^n|^2]\\ &\le \left(1+2C_g+2C_g^2+\frac{2C_g^2
      \delta}{\kappa_n(1-\kappa_n)}\right)\delta
  \sum_{j=i}^{n-1}\E[|y_j^n-\overline{y}_j^n|^2]+ c \delta^2.
\end{align*}
Let $n$ be bigger than $N_0$, then $\delta\left(1+2C_g+2C_g^2+\frac{2 \delta C_g^2 \delta}{\kappa_n(1-\kappa_n)}\right)<1$ (for all $n\ge
1$ we have
$\frac{\delta}{\kappa_n(1-\kappa_n)} \le \frac{1}{\lambda} e^{2\lambda T}$).

The assumption on $\delta$ enables to apply Gronwall's Lemma to
get $\sup_{0\le i \le n} \E[|y_i^n-\overline{y}_i^n|^2] \le c
\delta^2$. Plugging this result in the previous inequality leads to
\eqref{prop6_1}. The convergence of $(A^n-K^n)- (\ov{A}^n-\ov{K}^n)$ ensues
from
\begin{align*}
  A^n_t-K^n_t=Y^n_0-Y^n_t-\int_0^t g(s,Y^n_s,Z^n_s,U^n_s)ds +\int_0^t Z^n_s
  dW^n_s+\int_0^t U^n_s d\tilde{N}^n_s,\\
  \ov{A}^n_t-\ov{K}^n_t=\ov{Y}^n_0-\ov{Y}^n_t-\int_0^t g(s,\ov{Y}^n_s,\ov{Z}^n_s,\ov{U}^n_s)ds +\int_0^t \ov{Z}^n_s
  dW^n_s+\int_0^t \ov{U}^n_s d\tilde{N}^n_s,\\
\end{align*}
from the Lipschitz property of $g$ and from \eqref{prop6_1}.
\end{proof}

\begin{proposition}\label{prop7}
  Assume that Assumption \ref{hypo2} holds and $g$
  is a Lipschitz driver. For $n \ge N_0$, we get
  \begin{align}\label{eq19} \sup_{0 \leq t \leq
      T}\E[
    |{Y}_t^{n}-Y_t^{p,n}|^2]+\E[\int_0^T|{Z}_s^{n}-Z_s^{p,n}|^2ds]+\E[\int_0^T|{U}_s^{n}-U_s^{p,n}|^2
    ds]\le \frac{c}{\sqrt{p}}.
\end{align}
Moreover, $\forall \; t \in [0,T]$,
$ \E[|{\alpha}^{n}_t-\alpha^{p,n}_t|^2] \le \frac{c}{\sqrt{p}}$.
\end{proposition}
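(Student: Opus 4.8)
The plan is to mimic the energy estimate of Proposition~\ref{prop6}, comparing this time the implicit reflected scheme \eqref{scheme2} with the implicit penalization scheme \eqref{discrete}; the new feature is that the reflecting increments of the two schemes do not cancel, so they must be controlled quantitatively in $p$. First I would set $w_j:=y_j^n-y_j^{p,n}$, subtract \eqref{discrete} from \eqref{scheme2}, and apply the discrete It\^o formula (Lemma~\ref{lem12}) to $|w|^2$ on each step; its drift part is $F_j=\delta\big(g(t_j,y_j^n,z_j^n,u_j^n)-g(t_j,y_j^{p,n},z_j^{p,n},u_j^{p,n})\big)+(a_j^n-a_j^{p,n})-(k_j^n-k_j^{p,n})$ and its martingale part has coefficients $z_j^n-z_j^{p,n}$, $u_j^n-u_j^{p,n}$, $v_j^n-v_j^{p,n}$. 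Summing from $j=i$ to $n-1$, using $w_n=\xi_n^n-\xi_n^n=0$ and discarding $\E[|F_j|^2]\ge 0$, this yields $\E[|w_i|^2]+\delta\sum_{j=i}^{n-1}\E[|z_j^n-z_j^{p,n}|^2]+\kappa_n(1-\kappa_n)\sum_{j=i}^{n-1}\E[|u_j^n-u_j^{p,n}|^2]$ bounded by a driver term $2\delta\sum_{j=i}^{n-1}\E[w_j(g(t_j,y_j^n,z_j^n,u_j^n)-g(t_j,y_j^{p,n},z_j^{p,n},u_j^{p,n}))]$ plus the reflecting terms $2\sum_{j=i}^{n-1}\E[w_j(a_j^n-a_j^{p,n})]-2\sum_{j=i}^{n-1}\E[w_j(k_j^n-k_j^{p,n})]$. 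The driver term is treated as in Lemma~\ref{Reflscheme}: the Lipschitz property of $g$ and Young's inequality absorb half of the $z$- and $u$-sums into the left-hand side, at the price of a factor $\delta\big(1+2C_g+2C_g^2+\tfrac{2\delta C_g^2}{\kappa_n(1-\kappa_n)}\big)\sum_{j=i}^{n-1}\E[|w_j|^2]$.

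The heart of the matter is the estimate of the reflecting terms. Decomposing $w_j=(y_j^n-\xi_j^n)+(\xi_j^n-y_j^{p,n})$ and using the complementarity conditions of \eqref{scheme2} ($(y_j^n-\xi_j^n)a_j^n=0$, $a_j^n\ge 0$, $y_j^n\ge\xi_j^n$, $a_j^{p,n}\ge 0$) together with the explicit formula $a_j^{p,n}=p\delta(y_j^{p,n}-\xi_j^n)^-$ from \eqref{discrete}, I expect every resulting piece to be nonpositive except $(\xi_j^n-y_j^{p,n})a_j^n\le(y_j^{p,n}-\xi_j^n)^-a_j^n=\tfrac1{p\delta}a_j^na_j^{p,n}$, so that $w_j(a_j^n-a_j^{p,n})\le\tfrac1{p\delta}a_j^na_j^{p,n}$; the symmetric decomposition $w_j=(y_j^n-\zeta_j^n)+(\zeta_j^n-y_j^{p,n})$ gives $-w_j(k_j^n-k_j^{p,n})\le\tfrac1{p\delta}k_j^nk_j^{p,n}$. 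Summing and applying the Cauchy--Schwarz inequality (both in the summation index and under $\E$), the reflecting terms are bounded by
\[
\tfrac2p\Big(\E\big[\tfrac1\delta\textstyle\sum_j|a_j^n|^2\big]\,\E\big[\tfrac1\delta\textstyle\sum_j|a_j^{p,n}|^2\big]\Big)^{1/2}+\tfrac2p\Big(\E\big[\tfrac1\delta\textstyle\sum_j|k_j^n|^2\big]\,\E\big[\tfrac1\delta\textstyle\sum_j|k_j^{p,n}|^2\big]\Big)^{1/2}.
\]
Lemma~\ref{Reflscheme} gives $\E[\tfrac1\delta\sum_j|a_j^n|^2]+\E[\tfrac1\delta\sum_j|k_j^n|^2]\le c$, while the a priori estimates for the penalization scheme proved in \cite{DL14} control $\E[\tfrac1\delta\sum_j|a_j^{p,n}|^2]+\E[\tfrac1\delta\sum_j|k_j^{p,n}|^2]$ by $cp$; hence the reflecting terms are $\le c/\sqrt p$.

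Collecting the three contributions and invoking $n\ge N_0$ — which, via $\tfrac\delta{\kappa_n(1-\kappa_n)}\le\tfrac1\lambda e^{2\lambda T}$ and $\delta\le T/N_0$, makes the coefficient in front of $\sum_{j=i}^{n-1}\E[|w_j|^2]$ strictly less than $1$ — I would then apply the discrete Gronwall lemma; since that coefficient is $O(\delta)$, its $n$-fold accumulation stays bounded and one gets $\sup_{0\le i\le n}\E[|w_i|^2]\le c/\sqrt p$. Plugging this back into the inequality gives $\delta\sum_j\E[|z_j^n-z_j^{p,n}|^2]+\kappa_n(1-\kappa_n)\sum_j\E[|u_j^n-u_j^{p,n}|^2]\le c/\sqrt p$, which together with the definitions \eqref{eq34}, \eqref{eq32} and $\kappa_n(1-\kappa_n)\sim\lambda\delta$ is exactly \eqref{eq19}. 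For the last assertion, I would express $\alpha^n=A^n-K^n$ and $\alpha^{p,n}=A^{p,n}-K^{p,n}$ through the discrete martingale representation, as at the end of the proof of Proposition~\ref{prop6} (difference of the $Y$-values, minus the driver integral, plus the discrete stochastic integrals); the Lipschitz property of $g$, the discrete It\^o isometry, and the bounds just obtained then yield $\E[|\alpha_t^n-\alpha_t^{p,n}|^2]\le c/\sqrt p$ for every $t\in[0,T]$. The step I expect to be the main obstacle is the sign analysis of the reflecting terms and, above all, identifying the precise a priori bound on $a^{p,n},k^{p,n}$ available from \cite{DL14} so that the Cauchy--Schwarz step produces exactly the rate $p^{-1/2}$; the remainder is the now-standard discrete BSDE energy estimate.
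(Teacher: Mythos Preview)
Your proposal is correct and follows essentially the same route as the paper: a discrete energy estimate via Lemma~\ref{lem12}, the sign analysis of the reflecting cross terms yielding $w_j(a_j^n-a_j^{p,n})\le (y_j^{p,n}-\xi_j^n)^- a_j^n=\tfrac{1}{p\delta}a_j^{p,n}a_j^n$ (and symmetrically for $k$), Cauchy--Schwarz to extract the $p^{-1/2}$ rate, and the discrete Gronwall lemma under $n\ge N_0$. The only point to sharpen is the source of the penalized a~priori bound: the estimate $\tfrac{1}{p\delta}\sum_j\E[|a_j^{p,n}|^2]+\tfrac{1}{p\delta}\sum_j\E[|k_j^{p,n}|^2]\le c$ (equivalently $\tfrac{1}{\delta}\sum_j\E[|a_j^{p,n}|^2]\le cp$, exactly what you need) is proved in the paper itself as Lemma~\ref{estimationpenal} in the Appendix, rather than being imported from \cite{DL14}.
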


\begin{proof}
Let us first prove \eqref{eq19}. From \eqref{scheme2}, \eqref{discrete} and
Lemma \ref{lem12} applied to the process $(y^n-y^{p,n})$ following the
beginning of the proof of Lemma \ref{Reflscheme}, we get
\begin{align*}
\E|y_j^n-y_j^{p,n}|^2+\delta\sum_{i=j}^{n-1} \E |z_i^n-z_i^{p,n}|^2&+(1-\kappa_n)\kappa_n\sum_{i=j}^{n-1} \E [|u_i^n-u_i^{p,n}|^2]+(1-\kappa_n)\kappa_n\sum_{i=j}^{n-1} \E [|v_i^n-v_i^{p,n}|^2]\\
&=2\sum_{i=j}^{n-1} \E[(y_i^{n}-y_i^{p,n})(g(t_i, y_i^n, z_i^n,u_{i}^n)-g(t_i, y_i^{p,n}, z_i^{p,n},u_{i}^{p,n})) \delta]\\
&+ 2\sum_{i=j}^{n-1} \E[(y_{i}^{n}-y_{i}^{p,n})(a_i^{n}-a_i^{p,n})]-2\sum_{i=j}^{n-1}\E[(y_i^n-y_i^{p,n})(k_i^n-k_i^{p,n})].
\end{align*}

Let us deal with the last two terms

\begin{equation*}
(y_{i}^n-y_{i}^{p,n})(a_i^n-a_{i}^{p,n})=(y_{i}^{n}-\xi_{i}^{n})a_{i}^{n}-(y_{i}^{p,n}-\xi_{i}^n)a_{i}^{n}-(y_{i}^{n}-\xi_{i}^{n})a_{i}^{p,n}+(y_{i}^{p,n}-\xi_{i}^n)a_{i}^{p,n} \leq (y_{i}^{p,n}-\xi_{i}^{n})^{-}a_{i}^{n}.
\end{equation*}
By using same computations, we derive
\begin{equation*}
(y_{i}^n-y_{i}^{p,n})(k_i^n-k_{i}^{p,n})\geq -(y_{i}^{p,n}-\zeta_{i}^{n})^+ k_{i}^{n}.
\end{equation*}

By using the Lipschitz property of $g$, we get

\begin{align*}
\E[|y_j^n-y_j^{p,n}|^2]&+\frac{1}{2}\delta
\E[|z_j^n-z_j^{p,n}|^2]+\frac{\kappa_n(1-\kappa_n)}{2}\E[|u_j^n-u_j^{p,n}|^2]\\
\leq& \left(2C_g+2C_g^2+\frac{2C_g^2
  \delta}{\kappa_n(1-\kappa_n)}\right)\delta\sum_{i=j}^{n-1}
\E[(y_i^n-y_i^{p,n})^2]+2\sum_{i=j}^{n-1}\E[(y_i^{p,n}-\xi_i^n)^{-}a_i^n+(y_i^{p,n}-\zeta_i^n)^+k_i^n].\\
\end{align*}

  Using Cauchy-Schwarz inequality gives

  \begin{align*}
   &\E[|y_j^n-y_j^{p,n}|^2]+\frac{1}{2}\delta
    \E[|z_j^n-z_j^{p,n}|^2]+\frac{\kappa_n(1-\kappa_n)}{2}\E[|u_j^n-u_j^{p,n}|^2]\\
& \leq \left(2C_g+2C_g^2+\frac{2C_g^2 \delta}{\kappa_n(1-\kappa_n)}\right)\delta\sum_{i=j}^{n-1}\E[(y_i^n-y_i^{p,n})^2]\\
&+ 2 \left(\delta \sum_{i=j}^{n-1} \E \left[\left((y_i^{p,n}-\xi_i^n)^-\right)^2\right]
\right)^{\frac{1}{2}}\left(\frac{1}{\delta} \sum_{i=j}^{n-1}\E
  [(a^n_i)^2]  \right)^{\frac{1}{2}}+ 2 \left(\delta \sum_{i=j}^{n-1}\E \left[\left((y_i^{p,n}-\zeta_i^n)^+\right)^2\right] \right)^{\frac{1}{2}}\left(\frac{1}{\delta}\sum_{i=j}^{n-1}\E[(k^n_i)^2]  \right)^{\frac{1}{2}},\\
\leq &\left(2C_g+2C_g^2+\frac{2C_g^2
    \delta}{\kappa_n(1-\kappa_n)}\right)\delta
\sum_{i=j}^{n-1}E[(y_i^n-y_i^{p,n})^2]\\
&+ \frac{2}{\sqrt p} \left(\frac{1}{p \delta} \sum_{i=j}^{n-1}\E[(a_i^{p,n})^2]  \right)^{\frac{1}{2}}\left(\frac{1}{\delta} \sum_{i=j}^{n-1}\E
  [(a^n_i)^2]  \right)^{\frac{1}{2}}+\frac{2}{\sqrt p} \left(\frac{1}{p \delta} \sum_{i=j}^{n-1}\E[(k_i^{p,n})^2]  \right)^{\frac{1}{2}}\left(\frac{1}{\delta}\sum_{i=j}^{n-1}\E[(k^n_i)^2]  \right)^{\frac{1}{2}}.
\end{align*}


Since $n \ge N_0$, Lemma \ref{Reflscheme}, Lemma \ref{estimationpenal} and Gronwall
inequality give \eqref{eq19}. Concerning $\alpha^n_t-\alpha^{p,n}_t$ we have
\begin{align*}
  \alpha^n_t-\alpha^{p,n}_t=&(Y^n_t-Y^{p,n}_t)-(Y^n_0-Y^{p,n}_0)-\int_0^t
  g(s,Y^n_s,Z^n_s,U^n_s)-g(s,Y^{p,n}_s,Z^{p,n}_s,U^{p,n}_s)ds \\
  &+\int_0^t
  (Z^n_s-Z^{p,n}_s) dW^n_s+\int_
  0^t (U^n_s-U^{p,n}_s)d \tilde{N}^n_s.
\end{align*}
It remains to take the square of both sides, then the expectation, and to use
the Lipschitz property of $g$ combining with \eqref{eq19} to get the result.

\end{proof}

\section{Numerical simulations}

We consider the simulation of the solution of a DRBSDE with obstacles and
driver of the following form:
$\xi_t:={(W_t)}^2+2(1-\frac{t}{T})\Tilde{N}_t+\frac{1}{2}(T-t),
\zeta_t:={(W_t)}^2+(1-\frac{t}{T})((\Tilde{N}_t)^2+1)+\frac{1}{2}(T-t),
g(t,\omega,y,z,u):=-5|y+z|+6u$.\\

Table \ref{tab1} gives the values of $Y_0$ with respect to $n$. We notice that
the algorithm converges quite fast in $n$. Moreover, the computational
time is low.

\begin{table}[htbp]
\caption{\small The solution ${y}^{n}$ at time $t=0$}\label{tab1}
\centerline{
\begin{tabular}{|c||c|c|c|c|c|c|c|}
  \hline
n & 10 & 20 & 50 & 100 & 200 & 300 & 400 \\
\hline
\hline
$y_0^{n}$ & 1.2191 & 1.3238 & 1.3953 & 1.4167 & 1.4293 & 1.4332 & 1.4352 \\
\hline
CPU time & $2.14 \times 10^{-4}$ & $1.5 \times 10^{-3}$ & 0.0211 & 0.1622 &
1.4230 & 5.2770 & 12.5635\\
 \hline
\end{tabular}}
\end{table}
When we use the explicit penalized scheme introduced in \cite{DL14}, we get
$y_0^{p,n}=1.4353$ for $n=400$ and $p=20000$. The CPU time is $12.85$s.

Figures \ref{img1}, \ref{img2} and \ref{img3} represent one path the Brownian
motion, one path of the compensated Poisson process (with $\lambda=5$) and the
corresponding path of $({y}^{n}_i,{\xi}^n_i,\zeta^n_i)_{1 \le i \le n}$. We
notice that for all $i$, ${y}^{n}_i$ stays between the two obstacles. The
values of $y_0^n$ and $y_0^{p,n}$ are almost the same when $n=400$ and
$p=20000$. The CPU times are also of the same order. The main advantage of the
reflected scheme is that there is only one parameter to tune ($n$).

\begin{figure}[htbp]
\centerline{
\includegraphics[width=0.75\textwidth,height=6cm]{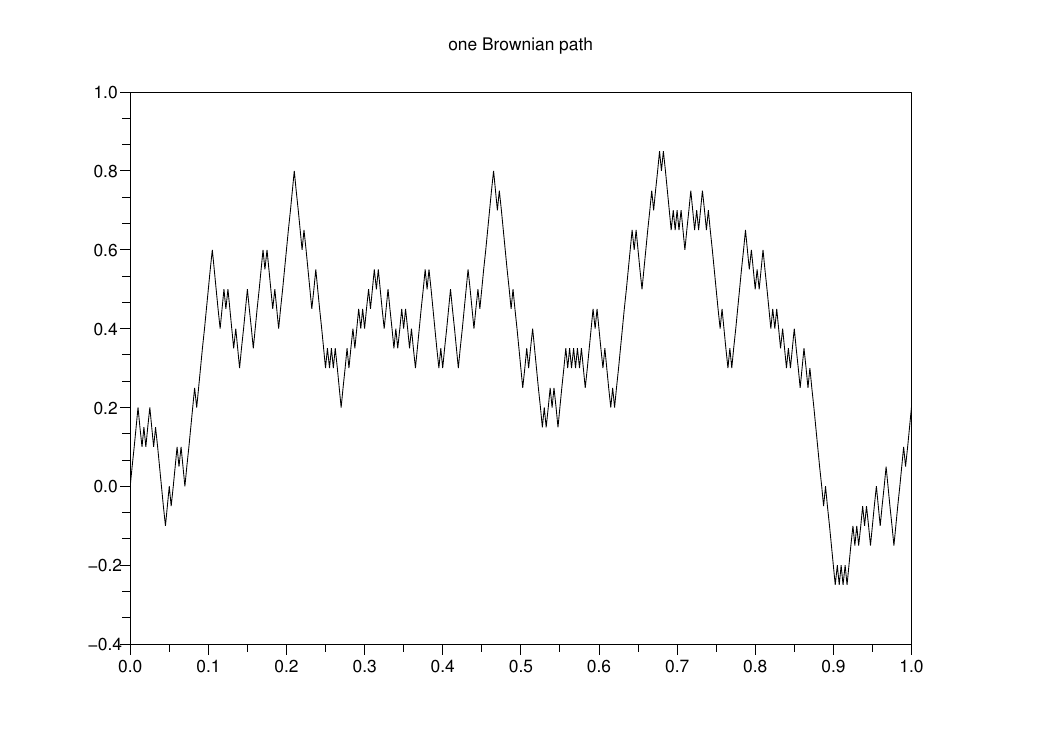}}
\caption{\small One path of the Brownian motion for $n=400$.}
\label{img1}
\end{figure}

\begin{figure}[htbp]
\centerline{
\includegraphics[width=0.75\textwidth,height=6cm]{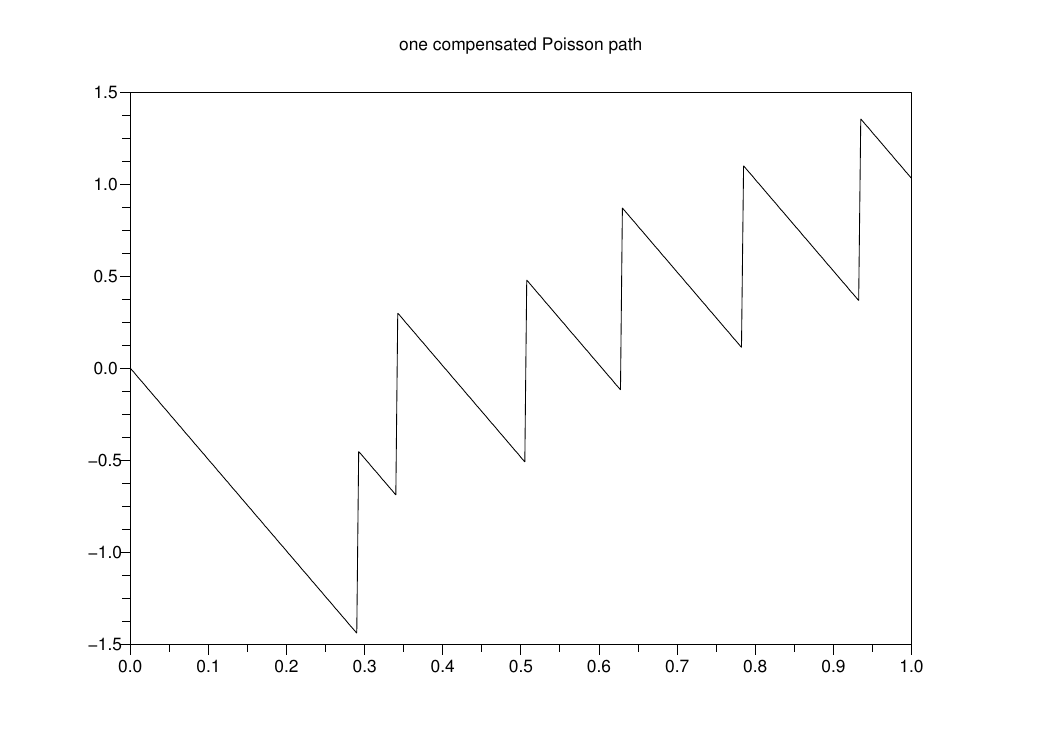}}
\caption{\small One path of the compensated Poisson process for $\lambda=5$ and $n=400$.}
\label{img2}
\end{figure}

\begin{figure}[htbp]
\centerline{
\includegraphics[width=0.75\textwidth,height=6cm]{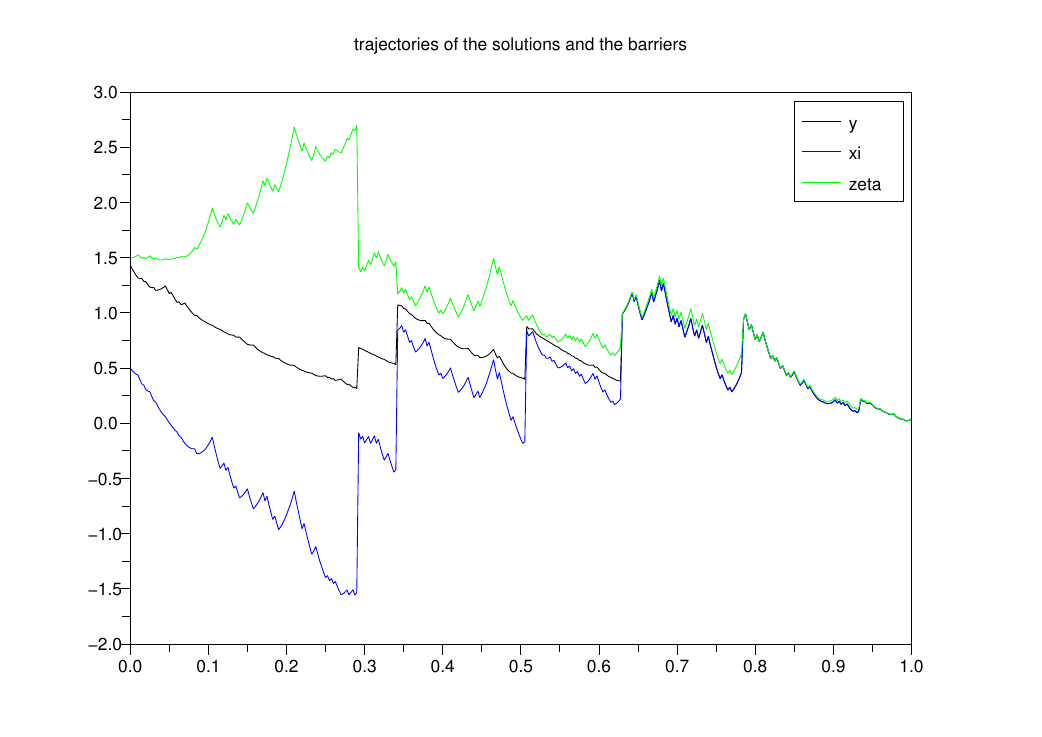}}
\caption{\small Trajectories of the solution ${y}^{n}$ and the
  barriers ${\xi}^n$ and ${\zeta}^n$ for $\lambda=5$ and $n=400$.}
\label{img3}
\end{figure}

\appendix

\section{Technical result for the implicit penalized scheme}

In this Section, we use $N_0$ and $c$ introduced in Definition \ref{def1}.

\begin{lemma}\label{estimationpenal} Suppose Assumption \ref{hypo2} holds and
  $g$ is a Lipschitz driver.
For each $p \in \mathbb{N}$ and $n \ge N_0$ we have
\begin{align*}
\sup_j\E[|y_j^{p,n}|^2]+\delta \sum_{j=0}^{n-1}\E[|z_j^{p,n}|^2]+\kappa_n(1-\kappa_n)\sum_{j=0}^{n-1}\E[|u_j^{p,n}|^2]+\frac{1}{p \delta} \sum_{j=0}^{n-1}\E[|a_j^{p,n}|^2]+\frac{1}{p \delta} \sum_{j=0}^{n-1}\E[|k_j^{p,n}|^2] \leq c.
\end{align*}
\end{lemma}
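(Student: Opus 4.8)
The strategy is to mimic closely the proof of Lemma \ref{Reflscheme}, applying the "Lemma \ref{lem12}" type a priori estimate to the process $y^{p,n}$ solving the penalized scheme \eqref{discrete}, summing over the time steps, and then absorbing the martingale-term contributions $\delta\sum|z^{p,n}_i|^2$ and $\kappa_n(1-\kappa_n)\sum|u^{p,n}_i|^2$ into the left-hand side by choosing an auxiliary constant $\alpha$ small enough. The key structural difference from Lemma \ref{Reflscheme} is that we no longer have the deterministic bound $\xi^n_j\le y^{p,n}_j\le\zeta^n_j$, so the term $\sup_j\E[|y^{p,n}_j|^2]$ must be controlled by Gronwall's lemma rather than read off from Assumption \ref{hypo2}, and the cross terms $\E[y^{p,n}_i a^{p,n}_i]$ and $\E[y^{p,n}_i k^{p,n}_i]$ must be handled via the explicit penalization formulas $a^{p,n}_i=p\delta(y^{p,n}_i-\xi^n_i)^-$, $k^{p,n}_i=p\delta(\zeta^n_i-y^{p,n}_i)^-$.

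\textbf{Step 1: a priori energy identity.} I would first apply the squared-increment identity (Lemma \ref{lem12}) to $y^{p,n}$ with $i_0=i$, $i_1=i+1$, and sum from $i=j$ to $n-1$, using the terminal condition $y^{p,n}_n=\xi^n_n$. This yields
\begin{align*}
\E[|y_j^{p,n}|^2]+\delta\sum_{i=j}^{n-1}\E[|z_i^{p,n}|^2]&+\kappa_n(1-\kappa_n)\sum_{i=j}^{n-1}\E[|u_i^{p,n}|^2]
\le\E[|\xi^n_n|^2]+2\delta\sum_{i=j}^{n-1}\E[y_i^{p,n}g(t_i,y_i^{p,n},z_i^{p,n},u_i^{p,n})]\\
&+2\sum_{i=j}^{n-1}\E[y_i^{p,n}a_i^{p,n}]-2\sum_{i=j}^{n-1}\E[y_i^{p,n}k_i^{p,n}].
\end{align*}
The driver term is handled as in Lemma \ref{Reflscheme} by Lipschitz-plus-Young, producing $\le c\delta+c\delta\sum\E[|y^{p,n}_i|^2]+\tfrac{\delta}{2}\sum\E[|z^{p,n}_i|^2]+\tfrac{\kappa_n(1-\kappa_n)}{2}\sum\E[|u^{p,n}_i|^2]$, with constants controlled using $\tfrac{\delta}{\kappa_n(1-\kappa_n)}\le\tfrac{1}{\lambda}e^{2\lambda T}$.

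\textbf{Step 2: the penalization cross terms.} For the crucial sign structure I would write, using $y^{p,n}_i a^{p,n}_i=p\delta(y^{p,n}_i-\xi^n_i)(y^{p,n}_i-\xi^n_i)^- + p\delta\,\xi^n_i(y^{p,n}_i-\xi^n_i)^-$, that
\[
y^{p,n}_i a^{p,n}_i \le p\delta\,\xi^n_i(y^{p,n}_i-\xi^n_i)^- = \xi^n_i a^{p,n}_i,
\]
since $(y-\xi)(y-\xi)^-\le 0$; symmetrically $-y^{p,n}_i k^{p,n}_i\le -\zeta^n_i(-k^{p,n}_i)=\zeta^n_i k^{p,n}_i$ after the analogous rearrangement. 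Hence $2\E[y^{p,n}_i a^{p,n}_i]-2\E[y^{p,n}_i k^{p,n}_i]\le 2\E[\xi^n_i a^{p,n}_i]+2\E[\zeta^n_i k^{p,n}_i]$, and each of these is bounded via Cauchy--Schwarz by $\eta\,\tfrac{1}{\delta}\sum\E[(a^{p,n}_i)^2]+\tfrac{1}{\eta}\delta\sum\E[(\xi^n_i)^2]$ (and similarly with $k$, $\zeta$). Using Lemma \ref{lem6}(i) the $\xi^n,\zeta^n$ sums are $\le c$. This is where the $\tfrac{1}{p\delta}\sum\E[(a^{p,n}_j)^2]$ and $\tfrac{1}{p\delta}\sum\E[(k^{p,n}_j)^2]$ terms must ultimately emerge: after bounding $\tfrac1\delta\sum\E[(a^{p,n}_i)^2]=p\cdot\tfrac1{p\delta}\sum\E[(a^{p,n}_i)^2]$, one feeds back the independent estimate of $\tfrac{1}{p\delta}\sum\E[(a^{p,n}_i)^2]$ obtained below; alternatively, and more cleanly, I would derive the a priori bound on $\tfrac{1}{p\delta}\sum\E[(a^{p,n}_i)^2]$ \emph{directly} from $a^{p,n}_i=p\delta(y^{p,n}_i-\xi^n_i)^-$ once $\sup_i\E[|y^{p,n}_i|^2]$ is known, writing $(a^{p,n}_i)^2=p\delta\,a^{p,n}_i(y^{p,n}_i-\xi^n_i)^-\le p\delta\,a^{p,n}_i(|y^{p,n}_i|+|\xi^n_i|)$ and summing — but the first route keeps the argument parallel to Lemma \ref{Reflscheme}.

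\textbf{Step 3: closing the loop.} Choosing $\alpha$ (the splitting parameter in the Cauchy--Schwarz step for the cross terms, analogous to the $\alpha$ in Lemma \ref{Reflscheme}) small enough — of size a fixed fraction of $\tfrac{1}{C_g^2}(\lambda e^{-2\lambda T}\wedge 1)$ — absorbs the extra $\delta\sum\E[|z^{p,n}_i|^2]$, $\kappa_n(1-\kappa_n)\sum\E[|u^{p,n}_i|^2]$ coming from \eqref{eq37}-type bounds on $a^{p,n}$, $k^{p,n}$ via the Lipschitz property; for $n\ge N_0$ the remaining $y$-coefficient $\delta(1+2C_g+2C_g^2+\tfrac{2C_g^2\delta}{\kappa_n(1-\kappa_n)})<1$ condition used in Proposition \ref{prop6} lets Gronwall's lemma give $\sup_j\E[|y^{p,n}_j|^2]\le c$. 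Substituting back controls $\delta\sum\E[|z^{p,n}_j|^2]$ and $\kappa_n(1-\kappa_n)\sum\E[|u^{p,n}_j|^2]$, and then the direct estimate from $a^{p,n}_i=p\delta(y^{p,n}_i-\xi^n_i)^-$ gives $\tfrac{1}{p\delta}\sum\E[(a^{p,n}_i)^2]\le p\delta\sum\E[((y^{p,n}_i-\xi^n_i)^-)^2]\le c$, and likewise for $k^{p,n}$.

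\textbf{Main obstacle.} The delicate point is the bookkeeping of constants so that the $z$- and $u$-energies, which reappear on the right-hand side through the Lipschitz bound on $a^{p,n}$, $k^{p,n}$ (the $p\delta(b^\xi+g(\cdot,\xi,z,u))^-$-type bounds — note $a^{p,n}$ need \emph{not} be $O(\delta)$ here as in the reflected scheme, it is only $p\delta$ times a bounded quantity), get absorbed with a \emph{$p$-independent} constant. One must be careful that the factor $p$ does not leak into the final bound; this is exactly why the $\tfrac{1}{p\delta}$ normalization appears in the statement, and why the cleanest path is to first close the Gronwall loop for $\E[|y^{p,n}_j|^2]$ using only the sign structure of Step 2 (which costs no $z,u$-energy), and only afterwards read off the $z$, $u$, $a^{p,n}$, $k^{p,n}$ bounds. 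Everything else is a routine repetition of the computations already displayed in the proofs of Lemma \ref{Reflscheme} and Proposition \ref{prop6}.
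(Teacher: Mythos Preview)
Your argument has a genuine gap exactly at the point you flag as the ``main obstacle'': the factor $p$ does leak, and neither of the two routes you propose removes it.

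In Step~2 you write $y_i^{p,n}a_i^{p,n}\le\xi_i^n a_i^{p,n}$ and discard the negative part. But the relation is in fact an \emph{identity},
\[
y_i^{p,n}a_i^{p,n}=-\tfrac{1}{p\delta}(a_i^{p,n})^2+\xi_i^n a_i^{p,n},\qquad
y_i^{p,n}k_i^{p,n}=\tfrac{1}{p\delta}(k_i^{p,n})^2+\zeta_i^n k_i^{p,n},
\]
and the terms $\tfrac{1}{p\delta}(a_i^{p,n})^2$, $\tfrac{1}{p\delta}(k_i^{p,n})^2$ go to the \emph{left}-hand side with a good sign. This is precisely where the $\tfrac{1}{p\delta}\sum\E[(a_i^{p,n})^2]$ estimate comes from; by throwing these terms away you lose the only $p$-independent handle on them. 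Your ``direct estimate'' in Step~3 is incorrect: from $a_i^{p,n}=p\delta(y_i^{p,n}-\xi_i^n)^-$ one gets $\tfrac{1}{p\delta}\sum_i\E[(a_i^{p,n})^2]=p\delta\sum_i\E[((y_i^{p,n}-\xi_i^n)^-)^2]$, and even with $\sup_i\E[|y_i^{p,n}|^2]\le c$ the right-hand side is only bounded by $p\delta\cdot n\cdot c=pTc$, which blows up with $p$.

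There is a second missing ingredient. After moving the $\tfrac{1}{p\delta}(a_i^{p,n})^2$ terms to the left one still has $2\sum_i\E[\xi_i^n a_i^{p,n}]-2\sum_i\E[\zeta_i^n k_i^{p,n}]$ on the right; the paper bounds these via $\alpha\,\E[\sup_i|\xi_i^n|^2]+\alpha^{-1}\E\bigl[(\sum_i a_i^{p,n})^2\bigr]$ together with a separate a~priori estimate (of the type in \cite[Lemma~2]{LSM04}) of the form
\[
\E\Bigl[\Bigl(\sum_i a_i^{p,n}\Bigr)^2\Bigr]+\E\Bigl[\Bigl(\sum_i k_i^{p,n}\Bigr)^2\Bigr]\le C\Bigl(c+\delta\sum_i\E[|y_i^{p,n}|^2+|z_i^{p,n}|^2]+\kappa_n(1-\kappa_n)\sum_i\E[|u_i^{p,n}|^2+|v_i^{p,n}|^2]\Bigr),
\]
obtained by writing $\sum_i a_i^{p,n}$ from the equation and squaring. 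Choosing $\alpha$ proportional to $C$ then allows the $z,u,v$-energies to be absorbed and Gronwall to close. Your Cauchy--Schwarz split $\eta\,\tfrac{1}{\delta}\sum\E[(a_i^{p,n})^2]+\eta^{-1}\delta\sum\E[(\xi_i^n)^2]$ cannot work here, because $\tfrac{1}{\delta}\sum\E[(a_i^{p,n})^2]=p\cdot\tfrac{1}{p\delta}\sum\E[(a_i^{p,n})^2]$ and there is nothing of order $p$ on the left to absorb it. Note also that the \eqref{eq37}-type pointwise bound $a_i^n\le\delta(b^\xi_{t_i}+g(t_i,\xi_i^n,z_i^n,u_i^n))^-$ you allude to is specific to the reflected scheme (it uses $\xi_i^n\le y_{i+1}^n$) and has no analogue for $a_i^{p,n}$.
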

\begin{proof}
By applying Lemma \ref{lem12} to the process $y^{p,n}$ between $i$ and $i+1$
and by suming the equality from $i=i$ to $i=n$, we get
\begin{align*}
\E[|y_j^{p,n}|^2]+\delta \sum_{i=j}^{n-1}\E[|z_i^{p,n}|^2]&+\kappa_n(1-\kappa_n)\sum_{i=j}^{n-1}\E[|u_i^{p,n}|^2]+\kappa_n(1-\kappa_n)\sum_{i=j}^{n-1}\E[|v_i^{p,n}|^2]\\
&\leq \E[|\xi^n_n|^2]+2\sum_{i=j}^{n-1} \E[|y_i^{p,n}||g(t_i,y_i^{p,n},z_i^{p,n},u_i^{p,n})\delta|]+2\E[\sum_{i=j}^{n-1}(y_i^{p,n}a_i^{p,n}-y_i^{p,n}k_i^{p,n})].
\end{align*}
Note that $y_i^{p,n}a_i^{p,n}=-\frac{1}{p\delta}(a_i^{p,n})^2+\xi_i^na_i^{p,n}$ and $y_i^{p,n}k_i^{p,n}=\frac{1}{p\delta}(k_i^{p,n})^2+\zeta_i^nk_i^{p,n}$.
We have that:
\begin{align*}
\E[|y_j^{p,n}|^2]&+\frac{\delta}{2} \sum_{i=j}^{n-1}\E[|z_i^{p,n}|^2]+\frac{\kappa_n(1-\kappa_n)}{2}\sum_{i=j}^{n-1}\E[|u_i^{p,n}|^2]+\frac{1}{p
  \delta} \sum_{i=j}^{n-1}\E[|a_i^{p,n}|^2]+\frac{1}{p \delta}
\sum_{i=j}^{n-1}\E[|k_i^{p,n}|^2]\\
&\leq \E[|\xi^n_n|^2]+\delta \E[\sum_{i=j}^{n-1}|g(t_i,0,0,0)|^2]+2\delta
\left(1+2C_g+2C_g^2+\frac{2C_g^2
    \delta}{\kappa_n(1-\kappa_n)}\right)\sum_{i=j}^{n-1}\E[|y_i^{p,n}|^2]\\
&+2\sum_{i=j}^{n-1}\E[(\xi_i^n)a_i^{p,n}]-2\sum_{i=j}^{n-1}\E[(\zeta_i^n)k_i^{p,n}].
\end{align*}
We get
$2\sum_{i=j}^{n-1}\E[(\xi_i^n)a_i^{p,n}]\le \alpha \E(\sup_i |\xi^n_i|^2)+
\frac{1}{\alpha}\E\left(\sum_{i=j}^{n-1} a_i^{p,n}\right)^2$ and $2\sum_{i=j}^{n-1}\E[(\zeta_i^n)k_i^{p,n}]\le \beta \E(\sup_i |\zeta^n_i|^2)+
\frac{1}{\beta}\E\left(\sum_{i=j}^{n-1} k_i^{p,n}\right)^2$. Following the
same type of proof as \cite[Lemma 2]{LSM04}, we get
\begin{align*}
  \E\left(\sum_{i=j}^{n-1} a_i^{p,n}\right)^2+\E\left(\sum_{i=j}^{n-1}
    k_i^{p,n}\right)^2 \le C(c+\E[\sum_{i=j}^{n-1} \delta(|y^{p,n}_i|^2 +
  |z^{p,n}_i|^2)+\kappa_n(1-\kappa_n) (|u^{p,n}_i|^2+|v^{p,n}_i|^2)].
\end{align*}

Finally, by taking $\alpha=\beta=4C$ and by applying the Gronwall inequality
(we recall $n \ge N_0$), we get that:
\begin{align*}
\sup_j \E[|y_j^{p,n}|^2+\frac{\delta}{4}\sum_{j=0}^{n-1}|z_j^{p,n}|^2+\frac{\kappa_n(1-\kappa_n)}{4}\sum_{j=0}^{n-1}|u_j^{p,n}|^2+\frac{1}{p \delta} \sum_{j=0}^{n-1}|a_j^{p,n}|^2+\frac{1}{p \delta} \sum_{j=0}^{n-1}|k_j^{p,n}|^2] \leq c.
\end{align*}

\end{proof}

\section{Some results on discrete stochastic calculus}
In this section we present two lemmas which are used throughout the paper. 

\begin{lemma}\label{lem12} Consider two integers $i_0$ and $i_1$ in ${0,...,N}$
  and $(y_n)_n$ a discrete process. We have
\begin{align*}
y_{i_1}^2= y_{i_0}^2+2y_{i_0}(y_{i_1}-y_{i_0})+(y_{i_1}-y_{i_0})^2.
\end{align*}

\end{lemma}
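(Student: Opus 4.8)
The plan is simply to verify an elementary algebraic identity; there is no probabilistic content here despite the appearance of a "discrete process." I would set $a := y_{i_0}$ and $b := y_{i_1}$ and observe that the claimed equality is nothing but the expansion of $b^2$ in terms of $a$ and the increment $b-a$.

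Concretely, I would start from the binomial identity $(b-a)^2 = b^2 - 2ab + a^2$, rearrange it to $b^2 = a^2 + 2ab - 2a^2 + (b-a)^2 = a^2 + 2a(b-a) + (b-a)^2$, and then substitute back $a = y_{i_0}$, $b = y_{i_1}$ to obtain
\begin{align*}
y_{i_1}^2 = y_{i_0}^2 + 2y_{i_0}(y_{i_1}-y_{i_0}) + (y_{i_1}-y_{i_0})^2,
\end{align*}
which is exactly the assertion. Note that the integers $i_0, i_1$ and the fact that $(y_n)_n$ is a discrete process play no role whatsoever: the identity holds for any two real (or even complex) numbers, and the statement is merely packaged in this form because it is the shape in which it will be applied (e.g. summed over consecutive indices in the proofs of Lemmas \ref{Reflscheme} and \ref{estimationpenal}).

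There is no main obstacle: the proof is a two-line computation. The only thing to be careful about is that the paper uses this lemma with $i_0 = i$, $i_1 = i+1$ and then sums the resulting telescoping-type identity against the dynamics of the scheme, so the value of stating it separately is purely organizational; the proof itself is immediate from expanding the square.
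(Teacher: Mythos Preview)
Your proof is correct and essentially identical to the paper's: the paper simply says the identity follows from expanding $((b-a)+a)^2$ and omits the details, which is exactly the substitution $a=y_{i_0}$, $b=y_{i_1}$ you carry out.
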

The proof comes from the computation of $((b-a)+a)^2$, we omit it.

\begin{lemma}(A discrete Gronwall lemma)
Let $a$, $b$ and $\alpha$ be positive constants, $\delta b<1$ and a sequence $(v_j)_{j=1,...n}$ of positive numbers  such that for every $j$
$$v_j+\alpha \leq a+b \delta \sum_{i=1}^{j}v_i. $$
Then
$$ \sup_{j \leq n}v_j +\alpha \leq a e^{bT}.$$
\end{lemma}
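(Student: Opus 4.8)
The plan is to convert the closed-loop inequality, in which $v_j$ appears on both sides through the term $b\delta\sum_{i=1}^j v_i$, into an explicit recursion to which a telescoping argument applies. First I would isolate the diagonal term, writing the hypothesis as $v_j+\alpha\le a+b\delta\sum_{i=1}^{j-1}v_i+b\delta v_j$, and then use $\delta b<1$ to move $b\delta v_j$ to the left, obtaining $(1-b\delta)v_j+\alpha\le a+b\delta\sum_{i=1}^{j-1}v_i$. Since $\alpha>0$ and $1-b\delta>0$, this yields an explicit bound of the form $v_j\le a'+b'\delta\sum_{i=1}^{j-1}v_i$ whose right-hand side no longer involves $v_j$. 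The positivity of the $v_i$ and of $\alpha$ is exactly what licenses discarding $\alpha$ on the left and keeping all signs under control when dividing by $1-b\delta$.

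Next I would run the standard discrete Gronwall telescoping on the explicit form. Setting $T_j:=a'+b'\delta\sum_{i=1}^{j-1}v_i$, so that $v_j\le T_j$ and $T_1=a'$, the increment satisfies $T_{j+1}-T_j=b'\delta v_j\le b'\delta T_j$, whence $T_{j+1}\le(1+b'\delta)T_j$ and, by induction, $T_j\le a'(1+b'\delta)^{j-1}$. Using the elementary inequality $1+x\le e^x$ and the relation $\delta j\le \delta n=T$, this converts the geometric growth into an exponential one, $v_j\le a'\,e^{b'\delta(j-1)}$, and taking the supremum over $j\le n$ and reinstating the carried term $\alpha$ is what must be shown to yield the claimed estimate $\sup_{j\le n}v_j+\alpha\le a\,e^{bT}$.

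The crux, and the step I expect to be most delicate, is to verify that the corrections produced by the diagonal term really collapse to the clean constant $e^{bT}$ rather than to a strictly larger one: the naive reduction above carries factors of $(1-b\delta)^{-1}$ inside both $a'$ and $b'$, so one must exploit $\delta b<1$ together with $\delta n=T$ (and the non-negativity of $\alpha$, which can only improve the bound) with some care to absorb them and land exactly on $a\,e^{bT}$. The monotone, positive structure of $(v_j)_j$ and the elementary telescoping identity of Lemma \ref{lem12} are the tools that make this bookkeeping transparent; once the explicit recursion is in hand, the remainder is the routine geometric-to-exponential passage.
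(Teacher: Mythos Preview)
The paper does not actually prove this lemma; it simply refers to \cite[Lemma 2.2]{MPX02} and omits the argument. Your telescoping approach is the standard one and is essentially correct. However, your worry about the constant is well founded and cannot be dismissed as mere bookkeeping: after absorbing the diagonal term you arrive at $v_j\le a(1-b\delta)^{-j}$, and since $(1-x)^{-1}\ge e^{x}$ for $x\in(0,1)$ this only gives $\sup_{j\le n}v_j+\alpha\le a(1-b\delta)^{-n}$, which is in general \emph{strictly larger} than $ae^{bT}$. Indeed, with $n=1$, $\alpha=0$, $a=1$ and $b\delta=\tfrac12$, the hypothesis allows $v_1=2>e^{1/2}$, so the constant $e^{bT}$ in the stated form (closed sum $\sum_{i=1}^{j}$ on the right) is not attainable; one either needs the open sum $\sum_{i=1}^{j-1}$, or must accept a constant such as $(1-b\delta)^{-n}$ (or $e^{2bT}$ under the extra assumption $b\delta\le\tfrac12$). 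For every application in this paper only a generic constant $c$ is required, so this discrepancy is harmless in context.

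One small remark: Lemma~\ref{lem12} (the square expansion identity) plays no role in the Gronwall argument and should not be invoked here.
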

A proof of this lemma can be found in \cite[Lemma 2.2]{MPX02}, so we omit it.



\bibliographystyle{abbrv}
\bibliography{ref}

\end{document}